\newtheorem{thm}{Theorem}[section]
\newtheorem{lem}[thm]{Lemma}
\newtheorem{prop}[thm]{Proposition}
\newcommand{\pipsitilde}{\pi^{\thicksim}_{\psi}}
\newcommand{\GL}{\mathrm{GL}}
\newcommand{\Tr}{\mathrm{Tr}}
\newcommand{\Trans}{\mathrm{Trans}}
\newcommand{\R}{\mathbb R}
\newcommand{\C}{\mathbb C}
\newcommand{\etaA}{\eta^{\mathrm{Ar}}}
\newcommand{\etaABV}{\eta^{\mathrm{ABV}}}
\newcommand{\chG}{{}^\vee G}
\newcommand{\ch}[1]{{}^\vee#1}
\newcommand{\X}{ X(\chG^\Gamma)}
\newcommand{\Lift}{\mathrm{Lift}}
\renewcommand{\O}{\ch \mathcal O}
\newcommand{\XO}{ X\left(\O,\chG^\Gamma\right)}
\newcommand{\XiO}{ \Xi\left(\O,\chG^\Gamma\right)}
\title{Arthur packets for pure real forms of symplectic and special
  orthogonal groups} 
\author{N. Arancibia Robert, P. Mezo}
\begin{document}
\maketitle
\begin{abstract}
Arthur packets have been defined for pure real forms of symplectic and
special orthogonal groups following two different approaches.  The
first approach, due to Arthur, Moeglin and Renard uses harmonic
analysis.  The second approach, due to Adams, Barbasch and Vogan uses
microlocal geometry.  We prove that the two approaches produce
essentially equivalent Arthur packets.  This extends previous work of
the authors and J. Adams for the quasisplit real forms.
\end{abstract}
\tableofcontents

\section{Introduction}

The first goal of this note is to review some packets of
representations, conjectured by Arthur, in the setting of
real symplectic and special orthogonal groups.  We
restrict our attention to pure real forms of these groups.  There are
two different approaches to defining these \emph{Arthur packets}.
One of them is due to Arthur himself for
quasisplit forms of these groups (\cite{Arthur}).  His analytic approach was
extended to include pure real forms by Moeglin and
Renard (\cite{MR1}).  The other approach follows the work
of Adams, Barbasch and Vogan in microlocal geometry  (\cite{ABV}). The
principal goal of this 
note is to prove that the two approaches produce essentially
equivalent Arthur packets.  The equivalence of the Arthur packets was given
in the quasisplit case in \cite{AAM}.  The only pure real form of
a symplectic group is quasisplit, and so the truly new
result here is the equivalence of Arthur packets for the pure real
forms of special orthogonal groups. 

We assume that the reader is somewhat familiar with the classification of real
reductive groups, and the basic
formalism of Arthur parameters and Arthur packets. 
Suppose $N \geq 2$ and $G$ is either the complex group $\mathrm{Sp}_N$ or
$\mathrm{SO}_N$.  Fix $\delta_{q}$ to be an antiholomorphic automorphism of
$G$ such that its fixed-point subgroup $G(\mathbb{R}, \delta_{q})$ is
a quasisplit real form of $G$.   Let $\Gamma =
\mathrm{Gal}(\mathbb{C}/\mathbb{R})$.  Then
$\delta_{q}$ determines an action of $\Gamma$ on $G$, and this action
may be identified with the trivial cocycle in $H^{1}(\Gamma,G) =
H^{1}(\mathbb{R},G)$.  As we recall in Section \ref{pureforms},  the
remaining classes 
$\delta \in H^{1}(\Gamma, G)$ also determine real forms $G(\mathbb{R},
\delta)$ of $G$.  We
call the classes in $H^{1}(\Gamma, G)$ the \emph{pure real forms} of
$G$.

Let 
\begin{equation}\label{eq:AparamG}
\psi_G^{}:W_\mathbb{R}\times\mathrm{SL}_2\ \longrightarrow\ {}^{\vee}G^{\Gamma}.
\end{equation}
be an Arthur parameter for $G$, where $^{\vee}G^{\Gamma}$ denotes the
Galois form of the L-group of $G$.  The centralizer
$\mathrm{Cent}(\psi_{G}(W_{\mathbb{R}}),{^\vee}G)$  of the image of
$\psi_{G}$ in ${^\vee}G$ has component group denoted by
$$A_{\psi_{G}} = \mathrm{Cent}(\psi_{G}(W_{\mathbb{R}}),{^\vee}G) \, / \,
(\mathrm{Cent}(\psi_{G}(W_{\mathbb{R}}),{^\vee}G))^{0}.$$
It is a finite 2-group (\cite{Arthur}*{(1.4.8)}).  The Arthur parameter
$\psi_{G}$ also determines an element $s_{\phi_{G}} \in A_{\psi_{G}}$
which is the coset of
\begin{equation} \label{spsi}
\psi_{G}\left(1, \begin{bmatrix} -1 & 0 \\ 0 & -1 \end{bmatrix}
\right) \in \mathrm{Cent}(\psi_{G}(W_{\mathbb{R}}),{^\vee}G).
\end{equation}
In Arthur's approach to defining the packet parameterized by $\psi_{G}$,
there is an additional object that comes into play. It is the group
$\mathrm{Out}_{N}(G)$, which is only non-trivial when
$N$ is even and $G =\mathrm{SO}_{N}$.  In the non-trivial case it is
a group of order two, generated by an outer automorphism of $\mathrm{SO}_{N}$.
(\cite{Arthur}*{\emph{p.}~12}.  See Section \ref{arthurpackets}).

In \cite{Arthur}*{Theorem 2.2.1}, Arthur provides a stable
distribution on the $\mathrm{Out}_{N}(G)$-invariant
test functions on $G(\mathbb{R},\delta_{q})$.  Although it is not
immediately apparent from his notation, his distribution may be
written as
\begin{equation}
  \label{etaar}
\eta^{\mathrm{Ar}}_{\psi_{G}}(\delta_{q}) = \sum_{\tilde{\pi} \in
  \widetilde{\Pi}_{\psi_{G}}^{\mathrm{Ar}}(\delta_{q})} 
 \mathrm{Tr}\left(\tau^{\mathrm{Ar}}_{\psi_{G}}(\tilde{\pi})(s_{\psi_{G}})
 \right) \, \tilde{\pi}
 \end{equation}
(\cite{Arthur}*{(7.4.1)}, \cite{AAM}*{Section 10}).
 Here, $\widetilde{\Pi}_{\psi_{G}}^{\mathrm{Ar}}(\delta_{q})$ is a finite set of
$\mathrm{Out}_{N}(G)$-orbits of irreducible unitary
representations of $G(\mathbb{R}, \delta_{q})$.  We call this
set the \emph{Arthur packet} of $\psi_{G}$ for
$G(\mathbb{R},\delta_{q})$.   In (\ref{etaar}) we identify $\tilde{\pi}$
with its distribution character, and  $\tau_{\psi_{G}}^{\mathrm{Ar}}(\tilde{\pi})$
is a finite-dimensional representation of  $A_{\psi_{G}}$. 
The crux of Arthur's theorem is that the  distributions
$\eta_{\psi_{G}}^{\mathrm{Ar}}(\delta_{q})$  
satisfy both ordinary and twisted endoscopic  identities as
conjectured in \cite{Arthur89}*{Conjecture 6.1 (ii)}.   Arthur
uses harmonic analysis in local and global settings to prove his
theorem.

Continuing in the same vein, Moeglin and Renard extend Arthur's
results to pure real forms (\cite{MR1}).  For every pure real form $\delta \in
H^{1}(\Gamma, G)$, there is a corresponding real form  $G(\mathbb{R},
\delta)$ of $G$.  Moeglin and  
Renard produce stable distributions
$$\eta_{\psi_{G}}^{\mathrm{Ar}}(\delta) = \sum_{\tilde{\pi} \in
  \widetilde{\Pi}_{\psi_{G}}^{\mathrm{Ar}}(\delta)} 
 \mathrm{Tr}\left(\tau^{\mathrm{Ar}}_{\psi_{G}}( \tilde{\pi})(s_{\psi_{G}})
 \right) \, \tilde{\pi}$$
whose terms mirror those of (\ref{etaar}).  In particular, the Arthur
packet $\widetilde{\Pi}_{\psi_G^{}}^{\mathrm{Ar}}(\delta)$ is a finite
set of $\mathrm{Out}_{N}(G)$-orbits of irreducible unitary 
representations of $G(\mathbb{R},\delta)$.
The distributions 
$\eta_{\psi_{G}}^{\mathrm{Ar}}(\delta)$ also satisfy Arthur's conjectured
endoscopic identities, and the method of proof again relies on
harmonic analysis.

Adams, Barbasch and Vogan approach this subject following
different methods, which are based on  microlocal geometry and 
equivariant sheaf theory on a generalized flag variety.  Using these
methods they 
produce stable distributions of the form
$$\eta_{\psi_{G}}^{\mathrm{ABV}}(\delta)=  \sum_{\pi \in
  \Pi_{\psi_{G}}^{\mathrm{ABV}}(\delta)} (-1)^{d S_{\pi} - d S_{\psi_{G}}}\
\mathrm{Tr}\left(\tau^{\mathrm{ABV}}_{\psi_{G}}(\pi) (1)
 \right) \, \pi.$$
The Arthur packet
$\Pi_{\psi_{G}}^{\mathrm{ABV}}(\delta)$ is a finite set of irreducible
representations of $G(\mathbb{R},\delta)$.  \emph{A priori},
these irreducible representations are not known to be unitary
(\emph{cf.} \cite{AMBV}), but they still come attached with finite-dimensional
representations 
$\tau^{\mathrm{ABV}}_{\psi_{G}}(\pi)$ of $A_{\psi_{G}}$.  In addition,
the distributions 
$\eta_{\psi_{G}}^{\mathrm{ABV}}(\delta)$ satisfy
Arthur's conjectured endoscopic identities.
There are  notable advantages to  this approach.  One is that
$\mathrm{Out}_{N}(G)$-orbits do not appear.  Furthermore, the
approach works for \emph{any} connected reductive algebraic group $G$
and \emph{all} of its real forms.

One cannot expect a literal equality between
$\eta_{\psi_{G}}^{\mathrm{ABV}}(\delta)$ and
$\eta_{\psi_{G}}^{\mathrm{Ar}}(\delta)$ since the latter is expressed
in terms of $\mathrm{Out}_{N}(G)$-orbits and the former is not.
Apart from this wrinkle, one would expect the two
distributions to agree.  In the quasisplit setting,
\emph{i.e.}~$\delta = \delta_{q}$, the precise relationship between
$\eta_{\psi_{G}}^{\mathrm{ABV}}(\delta_{q})$ and 
$\eta_{\psi_{G}}^{\mathrm{Ar}}(\delta_{q})$ is given in
\cite{AAM}*{Theorem 9.3}. 
The main result of this note is the extension of this theorem to the
pure real forms, namely
$$\eta_{\psi_G^{}}^{\mathrm{Ar}}(\delta) = 
\mathrm{Out}_N(G)\cdot\left(\eta_{\psi_G^{}}^{\mathrm{ABV}}(\delta)\right),
\quad \widetilde{\Pi}_{\psi_{G}}^{\mathrm{Ar}}(\delta) =
\mathrm{Out}_N(G)\cdot \Pi_{\psi_{G}}^{\mathrm{ABV}}(\delta)$$ 
(Theorem \ref{thm:abv=ar}). In fact, we prove a more refined
identity in which $\eta_{\psi_G^{}}^{\mathrm{Ar}}(\delta)$ and
$\eta_{\psi_G^{}}^{\mathrm{ABV}}(\delta)$ are regarded as
representations of $G(\mathbb{R}, \delta) \times A_{\psi_{G}}$.  This
allows us to extract information about the finite-dimensional
representations $\tau_{\psi_{G}}^{\mathrm{Ar}}(\tilde{\pi})$
and $\tau_{\psi_{G}}^{\mathrm{ABV}}(\pi)$ appearing in each
distribution, and to conclude that
$$\tau^{\mathrm{Ar}}_{\psi_G^{}}(\widetilde{\pi})\, =\,
\tau^{\mathrm{ABV}}_{\psi_G^{}}(\pi)$$ 
where $\widetilde{\pi}$ is the $\mathrm{Out}_N(G)$-orbit of $\pi\in
\Pi_{\psi_{G}}^{\mathrm{ABV}}(\delta)$. Along the way, we work through
several exercises proposed in \cite{AAM}*{Section 10}.

Our work is part of a broader comparison of Arthur packets.  Similar
comparisons have been made for real unitary groups in
\cite{AM-Unitary}, for p-adic general linear groups \cite{CunningRay}, and low
rank p-adic symplectic and special orthogonal groups \cite{cliftonetal}.

\section{Pure real forms and their representations}
\label{pureforms}

In this section $G$ may be any connected complex reductive algebraic group.
We fix  an antiholomorphic involutive automorphism $\delta_{q}$ of $G$
(\cite{ABV}*{(2.1)(b)}) such
that the fixed-point subgroup $G(\mathbb{R}, \delta_{q})$ is a
quasisplit group (\cite{springerart}*{3.2}).  Suppose $\delta$ is a
one-cocyle representing a class in $H^{1}(\Gamma, G)$ as in the
introduction, and let $\sigma$ be the non-identity element in $\Gamma$.
Then $\delta(\sigma)$ is an element in $G$ and the fixed-point
subgroup $G(\mathbb{R},\delta)$ of $\mathrm{Int}(\delta(\sigma))
\delta_{q}$ is a real form of $G$ (\cite{Springer}*{12.3.7}).  We call
$\delta$ a \emph{pure real form}
of $G$.  We often blur the distinction between $\delta$, its class
in $H^{1}(\Gamma, G)$, and (the isomorphism class of) its real form
$G(\mathbb{R}, \delta)$.
The quasisplit real form corresponds to the trivial
cocycle of $H^{1}(\Gamma, G)$ which we shall also denote by
$\delta_{q}$.

We note in passing that it is possible, and
often preferable, to recast the definition of pure real forms so
that $\delta_{q}$ is an \emph{algebraic} automorphism, instead of an
antiholomorphic automorphism (\cite{Adams-Taibi}*{Corollary 4.7}).
In addition, pure real forms are special instances of the \emph{strong
real forms} of \cite{ABV}*{Definition 2.13} and the \emph{rigid inner twists} of
(\cite{kaletha}*{Section 5.2}).

When $G$ is the stabilizer of a bilinear form, as
is the case for symplectic or orthogonal groups, $H^{1}(\Gamma, G)$
is in bijection with the real isomorphism classes of the complex bilinear form
(\cite{bookinv} Proposition 29.1).  If we take $G = \mathrm{Sp}_{N}$
then the quasisplit real form $G(\mathbb{R},\delta_{q}) =
\mathrm{Sp}_{N}(\mathbb{R})$ is split.   
It is well-known that a split real symplectic group
corresponds to a non-degenerate alternating 
bilinear form, and that this bilinear form has only one real isomorphism class 
(\cite{BasicAlgI} Theorem 6.3).
Hence, $H^{1}(\Gamma, \mathrm{Sp}_{N})$ is trivial and the only pure
real form of $\mathrm{Sp}_{N}$ is  the split form
$\mathrm{Sp}_{N}(\mathbb{R})$.  The non-split real forms
$\mathrm{Sp}(p,q)$ are not pure.  

On the other hand, the  orthogonal group $\mathrm{O}_{N}$ is the
  stabilizer of a non-degenerate symmetric 
bilinear form, and its real isomorphism classes are parameterized by $(p,q)$
where $p+q=N$ (\cite{BasicAlgI} Theorem 6.8).  Here, $p$ represents
the $p \times p$ identity matrix, 
$q$ represents the negative of the $q \times q$ identity matrix, and 
$(p,q)$ corresponds to the real form $\mathrm{O}(p,q)$. Fix
$(p',q')$ corresponding to a 
quasisplit orthogonal group $\mathrm{O}(p',q')$ and take $G =
\mathrm{SO}_{N}$.   Then we may identify
$(p',q')$ with $\delta_{q}$ and write $G(\mathbb{R}, \delta_{q}) =
\mathrm{SO}(p',q')$.  The pure real forms relative to $\mathrm{SO}(p',q')$ are
given by those 
$(p,q)$ for which $q$ has the same parity as $q'$
(\cite{bookinv}*{(29.29)}). If $N$ is even
there are exactly two inequivalent choices for $\delta_{q} = (p',q')$:
one corresponding to the 
split form $\mathrm{SO}(N/2,N/2)$, and the other for the  quasisplit
(but not split)  
form $\mathrm{SO}((N/2)+1, (N/2)-1)$.  If $N$ is odd, then the only choice for
$\delta_{q}$ up to equivalence is the one corresponding to the  split form
$\mathrm{SO}((N+1)/2, (N-1)/2)$. 
Going through these possibilities, the conclusion is that 
the pure real forms of $\mathrm{SO}_{N}$ correspond to the groups
$\mathrm{SO}(p,q)$ where $p+q = N$.  For even $N$, the real form
$\mathrm{SO}^{*}(N)$ (\cite{beyond}*{(1.141)}) is not pure. We
therefore avoid the difficulties presented by this real form in
\cite{Arthur}*{Section 9.1}.

Let $\delta \in H^{1}(\mathbb{R},G)$ be a pure real form.  We define a
representation of $G(\mathbb{R}, \delta)$ to be 
an admissible group representation in the sense of
\cite{greenbook}*{Definition 1.1.5}.  
We let $\Pi(G(\mathbb{R}, \delta))$ be the set of infinitesimal
equivalence classes of irreducible representations of
$G(\mathbb{R},\delta)$.  As customary, we will not distinguish between
a representation and its equivalence class.  Every representation in
$\Pi(G(\mathbb{R},\delta))$ has an infinitesimal character, which we
may identify with a ${^\vee}G$-orbit  of a semisimple element in
${^\vee}\mathfrak{g}$ (\cite{ABV}*{Lemma   15.4}).  For a fixed
semisimple ${^\vee}G$-orbit ${^\vee}\mathcal{O} \subset {^\vee}\mathfrak{g}$,
define $\Pi({^\vee}\mathcal{O}, G(\mathbb{R}, \delta))$ to be the subset of
elements in $\Pi(G(\mathbb{R}, \delta))$ with infinitesimal character
${^\vee}\mathcal{O}$.  Let
$$\Pi({^\vee}\mathcal{O}, G/\mathbb{R}) = \coprod_{\delta \in
  H^{1}(\Gamma, G)} \Pi({^\vee}\mathcal{O}, G(\mathbb{R}, \delta))$$
be the disjoint union of the (equivalence classes) of irreducible
representations of all pure real forms 
relative to a fixed quasisplit form $\delta_{q}$.

\section{Arthur Packets}
\label{arthurpackets}

We return to $G$ being a symplectic or special orthogonal group as in the
introduction.  In the following two sections we revisit the two approaches to
defining Arthur packets and their underlying distributions.  The
summaries are meant to provide enough detail to be able to
connect with the work of \cite{AAM} and \cite{AM-Unitary}, where
similar comparisons of the two approaches have been established.  In
the final section we prove the precise identities relating the two
kinds of Arthur packets.

\subsection{The approach of Arthur, Moeglin and Renard} 

We give a review of Arthur's construction of
$\eta_{\psi_{G}}^{\mathrm{Ar}}(\delta_{q})$ in (\ref{etaar}), 
and Moeglin and Renard's extension
$\eta_{\psi_{G}}^{\mathrm{Ar}}(\delta)$ to any pure real form $\delta$.
The starting point of Arthur's approach is to express $G$ as
a \emph{twisted endoscopic group} of $(\mathrm{GL}_{N}, \vartheta)$
(\cite{KS}*{Section 2}, \cite{Arthur}*{Section1.2}).  In this pair, 
$\vartheta$ is the outer automorphism of $\mathrm{GL}_{N}$ defined by
\nomenclature{$\vartheta$}{outer automorphism of $\mathrm{GL}_{N}$}
$$
  \vartheta(g) = \tilde{J} \, (g^{-1})^{\intercal} \, \tilde{J}^{-1}, \quad
  g \in \mathrm{GL}_{N},
$$
where $\tilde{J}$ is the anti-diagonal matrix
\nomenclature{$\tilde{J}$}{anti-diagonal matrix}
\begin{equation*}
\label{tildej}
\tilde{J} = \scriptsize \begin{bmatrix}0 & & & 1\\
  & &-1 & \\
  & \iddots & & \\
  (-1)^{N-1} & & & 0\end{bmatrix}\normalsize .
  \end{equation*}
The semidirect product $\mathrm{GL}_{N} \rtimes \langle \vartheta
\rangle$ is a disconnected algebraic group with non-identity component
$\mathrm{GL}_{N} \rtimes \vartheta$. The group $G$ is attached to the
pair $(\mathrm{GL}_{N}, \vartheta)$ through  an
element $s \vartheta \in
\mathrm{GL}_{N} \rtimes \vartheta$ whose fixed-point set
$({^\vee}\mathrm{GL}_{N})^{s \vartheta}$ contains ${^\vee}G$ as an
open subgroup.  This gives us an inclusion
  ${}^{\vee}G\hookrightarrow {^\vee}\mathrm{GL}_N$,
  which can be extended into an inclusion
$$\mathrm{St}_{G} : {^\vee}G^{\Gamma} \hookrightarrow
{^\vee}\mathrm{GL}_{N}^{\Gamma},$$
that makes $(G,{}^{\vee}G^{\Gamma},s, \mathrm{St}_{G})$ a \emph{twisted
endoscopic datum}  of  $({}^{\vee}\mathrm{GL}_{N}, {}^{\vee}\vartheta)$. 

  The inclusion allows us to extend the Arthur parameter $\psi_G$ of
(\ref{eq:AparamG}) 
into an Arthur parameter
\begin{equation}\label{prepsitilde}
\psi = \mathrm{St}_{G} \circ \psi_{G}
\end{equation}
for $\mathrm{GL}_{N}$. 
 There is a Langlands parameter $\varphi_{\psi}$ associated to $\psi$
 (\cite{Arthur89}*{Section 4}). It is defined by
\begin{equation}
\label{phipsi}
\varphi_{\psi}^{}(w)=\psi \left(w,
\begin{bmatrix}
  |w|^{\frac12}&0\\0&|w|^{-\frac12}
\end{bmatrix} \right), \quad w\in W_\R.
\end{equation}
The local Langlands correspondence attaches to $\varphi_\psi$ 
an irreducible representation $\pi_\psi$ of $\mathrm{GL}_{N}(\R)$.
This representation satisfies $$\pi_\psi\circ\vartheta \cong \pi_\psi,$$ 
and may therefore be extended to a representation 
$\pipsitilde$ of $\GL_N\rtimes\left<\vartheta\right>$.
The extension $\pipsitilde$ is not unique. We choose the extension following
\cite{Arthur}*{\emph{pp.} 62-63} 
by fixing a Whittaker datum. 
Let $\Tr_{\vartheta}(\pipsitilde)$ be the  twisted character of
$\pipsitilde$, \emph{i.e.}~the distribution 
\begin{align*}
\Tr_{\vartheta}(\pipsitilde):\  
 C_{c}^{\infty}(\mathrm{GL}_{N}
  (\mathbb{R})\rtimes \vartheta)\ &\longrightarrow\  \C\\
f \ &\longmapsto\   \mathrm{Tr} \int_{\mathrm{GL}_{N}(\mathbb{R})}
f(x\vartheta) \, \pipsitilde(x) \, \pipsitilde(\vartheta)  \, dx.
\end{align*}

In the same manner, one may define a twisted character
$\mathrm{Tr}(\pi^{\thicksim})$ for 
any extension $\pi^{\thicksim}$ of an irreducible representation $\pi$
of $\mathrm{GL}_{N}(\mathbb{R})$ satisfying $\pi \circ \vartheta
\cong \pi$.  Let $K \Pi(\mathrm{GL}_{N}(\mathbb{R}) \rtimes
\vartheta)$ be the $\mathbb{Z}$-module generated by all such twisted
characters.  For any pure real form $\delta$ of $G$, we define $K
\Pi(G(\mathbb{R}, \delta))$ to be the $\mathbb{Z}$-module generated by
the distribution characters of $\pi \in \Pi(G(\mathbb{R}, \delta))$.
The module $K\Pi(G(\mathbb{R}, \delta))$ is isomorphic to the
Grothendieck group of finite-length representations of $G(\mathbb{R},
\delta)$, and we identify the two modules.

Before introducing Arthur's definition of
$\eta_{\psi_{G}}^{\mathrm{Ar}}(\delta_{q})$, we should return to the group
$\mathrm{Out}_N(G)$ mentioned in the introduction.  The group
$\mathrm{Out}_N(G)$ is defined as the quotient
$$
\mathrm{Out}_N(G)\, =\, \mathrm{Aut}_N(G)/\mathrm{Int}(G)
$$ 
where $\mathrm{Aut}_N(G)$ is the group of automorphisms of the endoscopic
datum $(G,{}^{\vee}G^{\Gamma},s, \mathrm{St}_{G})$ (\cite{KS}*{\emph{p.}~18},
\cite{Arthur}*{\emph{p.}~12}). The group $\mathrm{Out}_N(G)$ may be
identified with the group of outer automorphisms of $G$ (\cite{KS}*{(2.1.8)}).
When $G$ is a symplectic group or an odd rank special orthogonal group
it follows that  $\mathrm{Out}_N(G)$ is trivial.  When $G =
\mathrm{SO}_{N}$ and $N$ is even, then $\mathrm{Out}_N(G) \cong
\mathrm{O}_{N}/ \, \mathrm{SO}_{N}$, a group of order two. In this
case, we choose a canonical representative
$$
w = \tilde{w}(N) \in \mathrm{O}_{N}$$
as in \cite{Arthur}*{\emph{p.}~10}, so that
\begin{equation}\label{eq:representativeON}
\mathrm{Out}_N(G)\, \cong\, \mathrm{O}_{N} / \mathrm{SO}_{N}\, \cong\,
\left<w\right>. 
\end{equation}
The group $G(\mathbb{R}, \delta)$ is preserved by $\mathrm{Out}_{N}(G)$ for any
pure real form $\delta$ (\cite{Arthur}*{Section 9.1}).  Let
$\widetilde{\Pi}(G(\R,\delta))$ be the set of orbits of 
$\mathrm{Out}_N(G)$ in $\Pi(G(\R,\delta))$. 
By definition,
$\widetilde{\Pi}(G(\R,\delta))={\Pi}(G(\R,\delta))$,
unless $G$ is an even rank special orthogonal group.

Suppose $G$ is an even rank special orthogonal group.
Then
$\widetilde{\Pi}(G(\R,\delta))$ contains orbits of cardinality one
or two.   The  restriction of the space of distributions $K \Pi(G(\mathbb{R},
\delta))$ to the $\mathrm{Out}_{N}(G)$-invariant subspace of
$C_{c}^{\infty}(G(\mathbb{R}, \delta))$ is the module of
$\mathrm{Out}_{N}(G)$-coinvariants
\begin{equation} \label{coinv}
K\Pi(G,\mathbb{R},\delta)/
(1-w) \cdot K\Pi(G,\mathbb{R},\delta).
\end{equation}
Here, $w$ acts on a distribution character $\pi \in
\Pi(G(\mathbb{R}, \delta))$ by the transpose action
$$(w \cdot \pi)(f) = \pi\left(\mathrm{Int}(w) \cdot
f \right),  \quad f \in C_{c}^{\infty}(G(\mathbb{R},\delta)),$$
where
$$\mathrm{Int}(w)\cdot f(x) = f(\mathrm{Int}(w)(x)) = f(wxw^{-1}),
\quad x \in G(\mathbb{R},\delta).$$
The actual representation corresponding to the distribution $w \cdot
\pi$ is the usual $w$-conjugate representation.
The module (\ref{coinv}) is isomorphic to $K \widetilde{\Pi}(G(\R,\delta))$, the
$\mathbb{Z}$-module generated by the $\mathrm{Out}_{N}(G)$-orbits
$\widetilde{\Pi}(G(\R,\delta))$.  We identify these two
modules. Actually we are more interested in the complex vector space
$$K_{\mathbb{C}} \widetilde{\Pi}(G(\R,\delta)) = \mathbb{C}
\otimes_{\mathbb{Z}} K \widetilde{\Pi}(G(\R,\delta)).$$
Generally, we shall abbreviate the $\mathbb{C}$-tensor product of any
of our $\mathbb{Z}$-modules with $K_{\mathbb{C}}$.  There are
isomorphisms between the vector spaces
\begin{align*}
  K_{\mathbb{C}}
  \Pi(G(\mathbb{R},\delta))_{|C_{c}^{\infty}(G(\mathbb{R},\delta))^{w}}
  & \cong K_{\mathbb{C}}\widetilde{\Pi}(G(\mathbb{R},\delta)) \\
  & \cong K_{\mathbb{C}} \Pi(G(\mathbb{R},\delta)/ (1-w) \cdot
    K_{\mathbb{C}} \Pi(G(\mathbb{R}, \delta))\\
    & \cong (1/2) (1+w) \cdot K_{\mathbb{C}} \Pi (G(\mathbb{R},\delta)),
  \end{align*}
and  we identify all of them with
$K_{\mathbb{C}}\widetilde{\Pi}(G(\mathbb{R},\delta))$.  Given a
distribution $\pi \in  \Pi (G(\mathbb{R},\delta))$ we
define
\begin{equation} \label{outaction}
  \mathrm{Out}_{N}(G)\cdot \pi \in K_{\mathbb{C}}
  \widetilde{\Pi}(G(\mathbb{R},\delta)) 
  \end{equation}
to mean any of the following equivalent operations
\begin{itemize}
\item Restriction of $\pi$ to the $\mathrm{Out}_{N}(G)$-invariant subspace
  $C_{c}^{\infty}(G(\mathbb{R},\delta))^{w}$

\item The $\mathrm{Out}_{N}(G)$-orbit of $\pi$
\item $(1/2) (1+w) \cdot \pi$.
\end{itemize}
We extend (\ref{outaction}) to $K_{\mathbb{C}} \Pi
(G(\mathbb{R},\delta))$ linearly.  The final interpretation of
(\ref{outaction})  was used
in \cite{AAM}.  We prefer to use the first two here, in line with
\cite{Arthur}.   
When $G$ is not an even rank special orthogonal group then
$\mathrm{Out}_{N}(G)$ is trivial and we define
(\ref{outaction}) to be $\pi$.

Arthur defines the virtual character $\etaA_{\psi_G}(\delta_q)$ 
as the solution of the twisted endoscopic transfer identity
\begin{equation}
  \label{spectrans}
  \Tr_{\vartheta}(\pipsitilde)=\Trans_{G(\R,\delta_q)}^{\GL_N(\mathbb{R}) \rtimes \vartheta} 
  ( \etaA_{\psi_{G}}(\delta_q)),
\end{equation}
with 
\begin{align}\label{eq:twistedtransfer}
\Trans_{G(\R,\delta_q)}^{\GL_N \rtimes \vartheta}:
K_{\mathbb{C}} \Pi(G(\R,\delta_q))^{st}\longrightarrow
K_{\mathbb{C}} \Pi(\mathrm{GL}_N(\R)\rtimes \vartheta),
\end{align}
being the endoscopic transfer map studied in \cite{Mezo}.
The transfer map
$\Trans_{G(\R,\delta_q)}^{\GL_N \rtimes \vartheta}$ is defined on the
space of \emph{stable}  
distributions $K\Pi(G(\R,\delta_q))^{st}$ of $G(\mathbb{R},\delta_q)$.
It is dual to Shelstad's \emph{geometric transfer map}
$C_{c}^{\infty}(\mathrm{GL}_{N}(\mathbb{R} \rtimes \vartheta))
\rightarrow C_{c}^{\infty}(G(\mathbb{R},\delta))$  
(\cite{Shel12}).  Since the image of geometric transfer lies in the
$\mathrm{Out}_{N}(G)$-invariant  
subspace of $C_{c}^{\infty}(G(\mathbb{R}, \delta))$
(\cite{Arthur}*{Corollary 2.1.2}), the endoscopic transfer 
map (\ref{eq:twistedtransfer}) passes to the $\mathrm{Out}_{N}(G)$-coinvariants
of $K_{\mathbb{C}}\Pi(G(\R, \delta_q))^{st}$.  As earlier, we
may regard this space as the image of $\frac{1}{2}(1+w)$ on 
$K_{\mathbb{C}} \Pi(G(\R,\delta_q))^{st}$, or as a subspace of 
$K_{\mathbb{C}} \tilde{\Pi}(G(\mathbb{R}, \delta_{q}))$.  By
\cite{Arthur}*{\emph{pp.}~12, 31}  
and \cite{AMR}*{Proposition 9.1},  
the map $\Trans_{G(\R,\delta_q)}^{\GL_N \rtimes \vartheta}$ 
is injective on this space.  
Consequently, equation (\ref{spectrans}) characterizes
$\etaA_{\psi_G}(\delta_q)$ uniquely, 
and the Arthur packet
$\widetilde{\Pi}_{\psi_G}^{\mathrm{Ar}}(\delta_q)$ is 
defined as the 
$\mathrm{Out}_N(G)$-orbits of irreducible representations occurring in 
$\etaA_{\psi_G}(\delta_q)$.

As things stand, the  distribution $\etaA_{\psi_G}(\delta_q)$ is a
complex linear combination of irreducible unitary representations.  It
turns out that it is in fact a character value of a unitary representation of
$$G(\mathbb{R}, \delta_{q}) \times A_{\psi_{G}}.$$
To see this, we consider the family of endoscopic
groups in the image of $\psi_{G}^{}$ (\cite{Arthur}*{\emph{pp.}~35-37}).
Each endoscopic group in this family corresponds to an element 
$\bar{s} \in A_{\psi_{G}}$  as follows. For each $\bar{s}\in
A_{\psi_G^{}}$ choose a semisimple 
representative $s \in{^\vee}G$, and let $H(\mathbb{R})$ be a quasisplit group
whose dual group 
${^\vee}H$ is the identity component of the centralizer in ${^\vee}G$ of
$s$. Then  
there is a natural embedding (\cite{Arthur}*{(1.4.10)}) 
\begin{align}\label{eq:natural-embeding}
\epsilon:{^{\vee}}H^{\Gamma}\ \hookrightarrow\  {^{\vee}}G^{\Gamma},
\end{align}
and an Arthur parameter $\psi_{H}$ for $H$ such that
$$\psi_{G}^{} = \epsilon \circ \psi_{H}^{}.$$
The group $H$ is a product of symplectic 
and special orthogonal groups. For a more precise description
see  (\ref{eq:H1xH2}) below.
Using Equation (\ref{spectrans}), Arthur attaches 
to each factor in this product a distribution and defines the virtual character 
$\eta_{\psi_{H}}^{\mathrm{Ar}}(H(\R))$ of $H$
to be their product (Equation (\ref{eq:etaAr=tensor-etaAra})). 
For each $\bar{s}\in A_{\psi_G}$,  the virtual character
$\eta_{\psi_G}^{\mathrm{Ar}}(\delta_q)( \bar{s})$ of $G(\R,\delta_q)$
is defined by
\begin{equation}\label{eq:spectraltransferidentity}
\eta_{\psi_G}^{\mathrm{Ar}}(\delta_q)(\overline{s})=
\mathrm{Trans}_{H(\R)}^{G(\R,\delta_q)}\left(\eta_{\psi_{H}}^{\mathrm{Ar}}(H(\R)\right)
\end{equation} 
(\cite{Arthur}*{(2.2.6)}),  where 
\begin{align}\label{eq:ordinarytransfer}
\mathrm{Trans}_{H(\R)}^{G(\R,\delta_q)}:\, K_{\mathbb{C}}
\Pi(H(\R))^{st}\, \longrightarrow\, K_{\mathbb{C}} \Pi(G(\R,\delta)),
\end{align}
is the ordinary endoscopic transfer map on
the space of stable virtual characters of $H(\R)$
(\cite{Langlands-Shelstad}, \cite{Shel82} and \cite{Shel08}).  
This defines a map
$$
\overline{s}\ \longmapsto\ \eta_{\psi_G}^{\mathrm{Ar}}(\delta_{q})(\overline{s}),
\quad \bar{s} \in A_{\psi_{G}}
$$
whose values are complex linear combinations of irreducible unitary
representations of $G(\R,\delta_q)$. One can see that
$\eta_{\psi_G}^{\mathrm{Ar}}(\delta_q)(\cdot)$ is the character
of a finite-length unitary representation of
$G(\R,\delta_q)\times A_{\psi_G^{}}$ in the following manner.  In
\cite{Arthur}*{(7.1.2)}, the virtual character
$\eta_{\psi_{G}}^{\mathrm{Ar}}(\delta_q)(\bar{s})$ is written as 
\begin{equation}\label{7.1.2}
\eta_{\psi_{G}}^{\mathrm{Ar}}(\delta_q)(\bar{s})\, =\, \sum_{\sigma
  \in \tilde{\Sigma}_{\psi_{G}}} < s_{\psi_{G}}\bar{s}, \sigma >
\sigma 
\end{equation}
where $s_{\psi_{G}}$ is given in (\ref{spsi}).
Here, $\tilde{\Sigma}_{\psi_{G}}$ is a
finite set of non-negative integral linear combinations
$$\sigma = \sum_{\widetilde{\pi}} m(\sigma, \widetilde{\pi}) \, \widetilde{\pi}$$
of $\mathrm{Out}_{N}(G)$-orbits of irreducible unitary representations
of $ G(\mathbb{R}, 
\delta_q)$.  As explained in
\cite{Arthur}*{\emph{pp.}~385-386}, there is an injective map from 
$\tilde{\Sigma}_{\psi_{G}}$ into the set of
characters of $A_{\psi_{G}}$ which are trivial on the centre of
${^\vee}G$.  The injection is denoted by
$$\sigma \, \longmapsto\,  < \cdot\, \sigma>.$$  
Following \cite{Arthur}*{Proposition 7.4.3 and (7.4.1)}, we may rewrite
(\ref{7.1.2})  as 
\begin{equation}
\label{7.4.1}
\eta_{\psi_{G}}^{\mathrm{Ar}}(\delta_{q})(\bar{s}) = \sum_{\widetilde{\pi} \in
  \widetilde{\Pi}_{\psi_{G}}^{\mathrm{Ar}}(\delta_{q})} \left( \sum_{\sigma 
  \in \tilde{\Sigma}_{\psi_{G}}}  m(\sigma, \widetilde{\pi}) \,
<s_{\psi_{G}} \bar{s},  
\sigma> \right) \widetilde{\pi}.
\end{equation}
By defining a finite-dimensional representation
\begin{equation}
\label{artprobb}
\tau_{\psi_{G}}^{\mathrm{Ar}} ( \widetilde{\pi}) = \bigoplus_{\sigma \in
  \tilde{\Sigma}_{\psi_{G}}} m(\sigma, \widetilde{\pi}) \, <\cdot, \sigma>,
  \end{equation}
Equation \eqref{7.4.1} becomes
\begin{equation}\label{eq:spectraltransferidentity2}
\eta_{\psi_G^{}}^{\mathrm{Ar}}(\delta_q)(\bar{s})=
\sum_{\widetilde{\pi} \in \widetilde{\Pi}_{\psi_{G}^{}}^{\mathrm{Ar}}(G(\R,\delta_q))}
\mathrm{Tr}\left(\tau_{\psi_{G}^{}}^{\mathrm{Ar}}( \widetilde{\pi} )(s_{\psi_{G}}
\bar{s})\right)\, \widetilde{\pi}. 
\end{equation}
In particular, for $\bar{s}=1$ we see that
$$ 
\eta_{\psi_G}^{\mathrm{Ar}}(\delta_q)\, =\, 
\eta_{\psi_G}^{\mathrm{Ar}}(\delta_q)(1)\, =\,  
\sum_{ \widetilde{\pi} \in \widetilde{\Pi}_{\psi_{G}^{}}^{\mathrm{Ar}}(G(\R,\delta_q))}
\mathrm{Tr}\left(\tau_{\psi_{G}^{}}^{\mathrm{Ar}}(
\widetilde{\pi})(s_{\psi_{G}})\right)\, \widetilde{\pi}.$$ 
Identity (\ref{eq:spectraltransferidentity2})
is called the \textit{ordinary endoscopic transfer identity},
and is one of the main points in Arthur's local conjectures.

In \cite{MR1} and \cite{MR3}, Moeglin and Renard
prove Arthur's local conjectures for all pure real forms $\delta$
 of $G$.  
Following Arthur's approach, for each $\bar{s}\in A_{\psi_G^{}}$
they define $\eta_{\psi_G}^{\mathrm{Ar}}(\delta)(\overline{s})$ as in
(\ref{eq:spectraltransferidentity}). The only difference is that
Kottwitz's sign $e(\delta)$ appears in their definition
(\cite{MR3}*{\S 2.1})
\begin{equation}\label{eq:vHtovG}
\eta_{\psi_G}^{\mathrm{Ar}}(\delta)(\bar{s})\ =\ e(\delta)\,
\mathrm{Trans}_{H(\R)}^{G(\R,\delta)} \left(
\eta_{\psi_{G}}^{\mathrm{Ar}}(H(\mathbb{R}) \right). 
\end{equation}
In \cite{MR1}*{Theorem 9.3} they prove the ordinary endoscopic
transfer identity (\ref{eq:spectraltransferidentity2}) 
 with $\delta_q$ replaced by  $\delta$.   
Thus,  $\eta_{\psi_{G}}^{\mathrm{Ar}}(\delta_{q})$ and the
definition of  Arthur packets are extended to any pure real form
$\delta$ of $G$. 
We point out that, similar to what happens in the quasisplit case, 
the distribution
$\eta_{\psi_G}^{\mathrm{Ar}}(\delta)(\overline{s})$ is only defined up
to the action of $\mathrm{Out}_{N}(G)$.  We also notice that 
for $\bar{s}=1$ the distribution
$$ 
\eta_{\psi_G}^{\mathrm{Ar}}(\delta)\, :=\,
\eta_{\psi_G}^{\mathrm{Ar}}(\delta)(1)\, =\,
\mathrm{Trans}_{G(\R,\delta_{q})}^{G(\R,\delta)} \left(
\eta_{\psi_{G}}^{\mathrm{Ar}}(\delta_{q}) \right)$$
is stable, since $\mathrm{Trans}_{G(\R,\delta_{q})}^{G(\R,\delta)}$
carries stable virtual characters to stable virtual characters.
The Arthur packet 
$\widetilde{\Pi}_{\psi_G^{}}^{\mathrm{Ar}}(\delta)$
is the set of 
$\mathrm{Out}_N(G)$-orbits of irreducible representations occurring in
$\eta_{\psi_G}^{\mathrm{Ar}}(\delta)$. In \cite{MR1}, Moeglin and Renard
use cohomological and parabolic induction to actually give a
description of the representations in each Arthur packet.

\subsection{The approach of Adams, Barbasch and Vogan}

In this section we give a quick review of Adams, Barbasch and Vogan's 
solution $\eta_{\psi_{G}}^{\mathrm{ABV}}(\delta)$ 
to Arthur's conjectures. 
The results in \cite{ABV}  apply to \emph{any}
complex connected reductive group $G$.
However, we continue by assuming that $G$ is symplectic or special
orthogonal, or any finite product of these groups.
We continue by writing $\delta$ for any pure real form of $G$.

Let $P\left({^\vee}G^{\Gamma} \right)$ be the set of
\emph{quasiadmissible} homomorphisms 
$\varphi:W_{\mathbb{R}} \rightarrow {^\vee}G^{\Gamma}$ of $G$
(\cite{ABV}*{Definition 5.2}), \emph{i.e.} the set of L-homomorphisms
for a quasisplit form of $G$. 
Associated to any $\varphi\in P\left({^\vee}G^{\Gamma} \right)$,
there is an infinitesimal character ${^\vee}\mathcal{O} \subset
{^\vee}\mathfrak{g}$  (\cite{ABV}*{Proposition 5.6}). 
Let
\begin{equation}
  \label{qlhomomorphisms}
  P\left({^\vee} \mathcal{O}, {^\vee}G^{\Gamma} \right),
\end{equation}
be the subset of $P\left({^\vee}G^{\Gamma} \right)$ consisting of
homomorphisms with infinitesimal character ${^\vee}\mathcal{O}$. 
The group ${^\vee}G$ acts on
$P\left({^\vee} \mathcal{O},{^\vee}G^{\Gamma} \right)$ by conjugation.   
It is to  the set of ${^\vee}G$-orbits
\begin{equation}
  \label{lparameters}
  P({^\vee}\mathcal{O}, {^\vee}G^{\Gamma})/ {^\vee}G
\end{equation}
that the Langlands correspondence, in its original form  \cite{Langlands}, assigns L-packets of representations
of $G(\R,\delta)$
$$
\varphi\, \longleftrightarrow\, \Pi_\varphi(\delta)
$$
($\Pi_\varphi(\delta)$ is empty when $\varphi$ is not relevant to $\delta$).
A great innovation of \cite{ABV},  
which is central to their construction of Arthur packets,
is to describe the L-packets of $G(\mathbb{R},\delta)$ in terms of an appropriate geometry on ${}^{\vee}G^{\Gamma}$.
This is done through the introduction of the complex variety
$X({^\vee}\mathcal{O}, {^\vee}G^{\Gamma})$ of \emph{geometric
parameters}, which lies between (\ref{qlhomomorphisms}) and
(\ref{lparameters}) (\cite{ABV}*{Definition 6.9}).  It may be regarded
as a set of equivalence classes in $P({^\vee}\mathcal{O},
{^\vee}G^{\Gamma})$ upon which ${^\vee}G$ still acts by conjugation
with finitely many orbits (\cite{AAM}*{Section 2.2},
\cite{ABV}*{Proposition 6.16}).  The map to equivalence classes
$$  P({^\vee}\mathcal{O}, {^\vee}G^{\Gamma}) \longrightarrow
  X({^\vee}\mathcal{O}, {^\vee}G^{\Gamma}) $$
passes to a bijection at the level of ${^\vee}G$-orbits
(\cite{ABV}*{Proposition 6.17}).  Thus, the 
local Langlands correspondence may supplemented by
\begin{equation}\label{eq:LtoS}
 S_\varphi \longleftrightarrow\varphi\longleftrightarrow \Pi_{\varphi}(\delta)
\end{equation}
where  $S_{\varphi} \subset \XO$ is the   ${^\vee}G$-orbit
corresponding to the   ${^\vee}G$-orbit of $\varphi$.
One motivation for introducing $\XO$ is that the closure relations
between ${}^{\vee}G$-orbits imply relationships between the
representations of corresponding L-packets. 
Moreover, 
in \cite{ABV}*{Theorem 10.4} the local Langlands correspondence is refined
from a bijection between L-packets and L-parameters,
to a bijection between (equivalence classes of) irreducible
representations and what they
refer to as \textit{complete geometric parameters}.
More precisely, the authors supplement each ${}^{\vee}G$-orbit
$S\subset X({^\vee}\mathcal{O},{^\vee}G^{\Gamma})$ with a
${}^{\vee}G$-equivariant local system $\mathcal{V}$ 
of vector spaces on $S$, and define 
the set of (pure) complete geometric
parameters $$\Xi({^\vee}\mathcal{O},{^\vee}G^{\Gamma})$$ for
$X({^\vee}\mathcal{O}, {^\vee}G^{\Gamma})$ 
as the set of pairs $\xi=(S,\mathcal{V})$.
The set of ${^\vee}G$-equivariant local systems on $S$
are in bijection with the representations in  an extended L-packet
$$\Pi_{S}(G/\R) \supset \Pi_{\varphi}(\delta)$$
where $S = S_{\varphi}$ as in (\ref{eq:LtoS}).
More symbolically, there is a bijection
\begin{align}\label{eq:RLLC}
\pi(\xi)\longleftrightarrow \xi=(S,\mathcal{V}),
\end{align}
where $\pi(\xi)$ runs over all irreducible representations with
infinitesimal character ${^\vee}\mathcal{O}$ of pure real forms of $G$
including $G(\mathbb{R},\delta)$. 

A striking feature of (\ref{eq:RLLC}) is that the pair
$\xi=(S, \mathcal{V})$ determines a perverse sheaf
$P(\xi)$ on $X({^\vee}\mathcal{O},{^\vee}G^{\Gamma})$. 
Let $\mathcal{P}(X({^\vee}\mathcal{O},{^\vee}G^{\Gamma}))$ be
the category of ${^\vee}G$-equivariant perverse sheaves of
complex vector spaces on $X({^\vee}\mathcal{O},{^\vee}G^{\Gamma})$
(\cite{Lunts}*{Section 5}). This is an abelian category and,  as
explained in \cite{ABV}*{(7.10)(d)}, its simple objects are
parameterized by the 
set of complete geometric parameters  
$$\xi\longmapsto P(\xi),\quad \xi \in
\Xi({^\vee}\mathcal{O}, {^\vee}G^{\Gamma}).$$ 
Hence, we may extend (\ref{eq:RLLC}) to a one-to-one correspondence
\begin{align}\label{eq:RLLC2}
\pi(\xi)\longleftrightarrow \xi\longleftrightarrow P(\xi).
\end{align}
We write $K\Pi({^\vee}\mathcal{O},G/\R)$ and
$KX({^\vee}\mathcal{O},{^\vee}G^{\Gamma})$
for $\mathbb{Z}$-modules generated by
$\Pi({^\vee}\mathcal{O},G/\R)$ and 
$\mathcal{P}(X({^\vee}\mathcal{O},{^\vee}G^{\Gamma}))$  respectively.
These two groups are Grothendieck groups of finite-length objects and
have respective bases
$$
\{\pi(\xi) : \xi \in \Xi({^\vee}\mathcal{O},{^\vee}G^{\Gamma})\}
\quad\text{and}\quad\{P(\xi) : \xi \in
\Xi({^\vee}\mathcal{O},{^\vee}G^{\Gamma})\}.$$
Correspondence (\ref{eq:RLLC2}) expresses a duality between
irreducible representations and irreducible perverse sheaves, in the
form a perfect pairing 
\begin{equation}
\label{prepair}
\langle \cdot, \cdot \rangle_{G} : K \Pi({^\vee}\mathcal{O},G/\R)
\times K \XO \longrightarrow \mathbb{Z}.
\end{equation}
The pairing is defined in \cite{ABV}*{Definition 15.11} using the
alternative bases of standard representations and constructible
sheaves.  It is a deep result (\cite{ABV}*{Theorem 15.12}), that this pairing 
satisfies 
$$\langle \pi(\xi), P(\xi') \rangle_{G} = e(\xi) \, (-1)^{dS_\xi} \,
\delta_{\xi, \xi'}, \quad
\xi,\xi' \in \Xi({^\vee}\mathcal{O},{^\vee} G^{\Gamma}),$$
with $dS_\xi$ the dimension of $S_\xi$, and $\delta_{\xi, \xi'}$ the
Kronecker delta. 

Using pairing \eqref{prepair}, we may regard virtual characters as
$\mathbb{Z}$-valued linear functionals on $K\XO$.
The theory of microlocal geometry provides a family of linear functionals
$$
\chi^{\mathrm{mic}}_{S} : K \XO \longrightarrow \mathbb{Z}
$$
parameterized by the ${^\vee}G$-orbits $S \subset
\XO$.  The linear functional $\chi_{S}^{\mathrm{mic}}$ is called the
\emph{microlocal multiplicity} along $S$. 
The microlocal multiplicities appear in the
theory of \emph{characteristic cycles} (\cite{ABV}*{Chapter 19},
\cite{Boreletal}), and are associated with ${^\vee}G$-equivariant
local systems on a conormal bundle of
$X({^\vee}\mathcal{O},{^\vee}G^{\Gamma})$ (\cite{ABV}*{Section 24},
\cite{GM}*{Appendix 6.A.}). 
The virtual characters associated by the pairing to the linear functionals
$\chi^{\mathrm{mic}}_S$
are stable (\cite{ABV}*{Corollary 1.26, Theorems 1.29 and 1.31}).

The microlocal multiplicity is the last ingredient needed for the
definition of the Arthur packets in \cite{ABV}.  
Suppose $\psi_G$ is an Arthur parameter of $G$, and
let $\varphi_{\psi_G^{}}$ be the L-parameter defined by $\psi_G^{}$
through (\ref{phipsi}). Let ${^\vee}\mathcal{O}$ be the
infinitesimal character of $\varphi_{\psi_{G}}$.  
Then by bijection  (\ref{eq:LtoS}), the parameter
$\varphi_{\psi_G^{}}$ corresponds to a ${}^{\vee}G$-orbit
\begin{equation} \label{SpsiG}
  \varphi_{\psi_G^{}} \longleftrightarrow S_{\psi_{G}}
\end{equation}
in  $\XO$.
The authors   define $\eta^{\mathrm{mic}}_{\psi_G}$ to be the virtual 
character associated to $\chi_{S_{\psi_{G}}}^{\mathrm{mic}}$ using
pairing (\ref{prepair}). That is, 
$\eta^{\mathrm{mic}}_{\psi_G}$ is  the unique virtual character
satisfying
$$\langle \eta^{\mathrm{mic}}_{\psi_{G}}, \mu \rangle_{G} =
\chi_{S_{\psi_{G}}}^{\mathrm{mic}}(\mu), \quad \mu \in K\XO.$$
The stable virtual character $\eta^{\mathrm{mic}}_{\psi_{G}}$ can be expressed
as a linear combination of irreducible
representations of pure real forms of $G$ which include $\delta$. 
Let 
\begin{equation}\label{mmm2}
\eta^{\mathrm{ABV}}_{\psi_{G}}(\delta)
\end{equation}
be the summand of $\eta^{\mathrm{mic}}_{\psi_{G}}$ coming from the
representations in $\Pi({}^{\vee}\mathcal{O},G(\mathbb{R},\delta))$.
The Arthur packet $\Pi_{\psi_G}^{\mathrm{ABV}}(\delta)$ is then defined 
 as  the set of irreducible representations
occurring in
$\etaABV_{\psi_{G}}(\delta)$. More explicitly, 
\begin{equation}
\label{abvdef}
\Pi^{\mathrm{ABV}}_{\psi_{G}}(\delta) \ =\  \{ \pi(\xi) : \xi \in
\Xi({^\vee}\mathcal{O}, {^\vee}G^{\Gamma}),  
\chi^{\mathrm{mic}}_{S_{\psi_{G}}}(P(\xi)) \neq 0, \pi(\xi)\in
\Pi(G(\mathbb{R},\delta)) 
\}.
\end{equation}

It is shown in \cite{ABV}*{Theorem 26.25} that
$\eta_{\psi_{G}}^{\mathrm{ABV}}(\delta)$ satisfies
the ordinary spectral transfer identities
(\ref{eq:spectraltransferidentity2}). In proving
(\ref{eq:spectraltransferidentity2}), 
the authors introduce sheaf-theoretic versions 
of the identities and assert that they are equivalent to their
analytic counterparts.  They begin by giving a sheaf-theoretic version  
of the endoscopic transfer map (\ref{eq:ordinarytransfer}), denoted by
Lift.    It is defined as follows. 
As in the previous section, for each $\bar{s}\in A_{\psi_G^{}}$ choose
a semisimple representative $s \in{^\vee}G$, and let $H(\mathbb{R})$
be its (quasisplit) endoscopic group.
Then  inclusion (\ref{eq:vHtovG}) induces an embedding of varieties
$
\epsilon:X({}^{\vee}\mathcal{O},{^\vee}H^{\Gamma})\hookrightarrow
X({}^{\vee}\mathcal{O},{^\vee}G^{\Gamma})$ (\cite{ABV}*{Corollary
  6.21}).
The inverse image functor  of $\epsilon$,
$$\epsilon^{\ast}:KX({}^{\vee}\mathcal{O},{^\vee}H^{\Gamma})\longrightarrow
KX({}^{\vee}\mathcal{O},{^\vee}G^{\Gamma}),$$ 
 and the pairing (\ref{prepair}), allow one to define a map 
$${\epsilon_{\ast}: K_{\mathbb{C}} \Pi({}^{\vee}\mathcal{O},H/\R)
   \longrightarrow
   K_{\mathbb{C}} \Pi({}^{\vee}\mathcal{O}, G/\R)}$$
(\cite{ABV}*{(26.17)(e)}).
 The Lift map is defined from $\epsilon_{\ast}$ in two steps. First,
 set $\text{Lift}_0$ 
to be the restriction of $\epsilon_{\ast}$ to the subspace
$K_{\mathbb{C}}\Pi({}^{\vee}\mathcal{O},H(\R))^{st}\subset
K_{\mathbb{C}} \Pi({}^{\vee}\mathcal{O},H/\mathbb{R})$ generated
by the stable virtual characters in  
the Grothendieck group $K\Pi({}^{\vee}\mathcal{O},{H}(\mathbb{R}))$ of
representations of $H(\mathbb{R})$. 
The Lift map
$$\text{Lift}_{H(\R)}^{G(\R,
  \delta)}:K\Pi({}^{\vee}\mathcal{O},H(\R))^{st}\longrightarrow 
K\Pi({}^{\vee}\mathcal{O},G(\R,\delta))$$
is then the projection of 
$\text{Lift}_0$ to the Grothendieck group of
representations of $G(\R,\delta)$ (\cite{ABV}*{Definition 26.18}).

It is argued on \cite{ABV}*{\emph{p.}~289} that for the quasisplit
real form $G(\mathbb{R},\delta_{q})$  the sheaf-theoretic and analytic
endoscopic lifting maps are 
the same,
$$\mathrm{Lift}_{H(\R)}^{G(\R,\delta_q)}\, =\, 
\mathrm{Trans}_{H(\R)}^{G(\R,\delta_q)}.$$
In the next section, we will need this identity for any pure real form
$G(\mathbb{R},\delta)$, and for this we appeal to
\cite{AM-Unitary}*{Section 11}, which deals with unitary groups.
The equality of Lift and Trans for  pure real forms of unitary groups
is a consequence of \cite{AM-Lpackets}*{Theorem 1.1}.  Since
\cite{AM-Lpackets}*{Theorem 1.1} is valid for pure real forms of arbitrary
groups $G$, the proof in  \cite{AM-Unitary}*{Section 11} applies
equally well to symplectic and special orthogonal groups.  We
therefore have the following result.
\begin{lem}\label{lem:trans=lift}
For all pure real forms $\delta$ of $G$
\begin{equation*}
\mathrm{Lift}_{H(\R)}^{G(\R,\delta)}=\mathrm{Trans}_{H(\R)}^{G(\R,\delta)}.
\end{equation*}
\end{lem}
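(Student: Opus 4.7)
The plan is to follow the strategy of \cite{AM-Unitary}*{Section 11} essentially verbatim. First, the identity is already known for the quasisplit form $\delta_q$ by \cite{ABV}*{p.~289}; the task is therefore to extend it to an arbitrary pure real form $\delta$. Both $\mathrm{Lift}_{H(\R)}^{G(\R,\delta)}$ and $\mathrm{Trans}_{H(\R)}^{G(\R,\delta)}$ are $\mathbb{C}$-linear maps out of $K_{\mathbb{C}}\Pi({}^{\vee}\mathcal{O}, H(\R))^{\mathrm{st}}$, so it suffices to exhibit a spanning set on which they agree.

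A natural choice is the set of stable sums of standard modules for $H(\R)$. On such a stable sum, the value of the analytic transfer is governed by Shelstad's endoscopic character identities, while the value of the sheaf-theoretic lift is computed via the pullback $\epsilon^{\ast}$ together with the pairing (\ref{prepair}). The equality of these two computations, for any pure real form of any connected reductive group, is precisely \cite{AM-Lpackets}*{Theorem 1.1}. Combining this input with the bookkeeping of \cite{AM-Unitary}*{Section 11} then yields the desired identity for $G$.

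The main point to verify is that no step in \cite{AM-Unitary}*{Section 11} secretly uses a feature special to unitary groups. In particular, one must check that the treatment of the Kottwitz sign $e(\delta)$ appearing in (\ref{eq:vHtovG}), the passage from the quasisplit form to a general pure real form via $\mathrm{Trans}_{G(\R,\delta_q)}^{G(\R,\delta)}$, and the reduction to standard modules all go through for symplectic and special orthogonal groups. Since \cite{AM-Lpackets}*{Theorem 1.1} is proved in the generality of arbitrary connected reductive $G$, and the remaining arguments in \cite{AM-Unitary}*{Section 11} are formal manipulations with geometric parameters, equivariant perverse sheaves, and stable virtual characters, this verification is expected to be routine rather than the locus of any real difficulty.
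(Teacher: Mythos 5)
Your proposal matches the paper's argument: the paper likewise cites \cite{ABV}*{p.~289} for the quasisplit case, then invokes \cite{AM-Unitary}*{Section 11} and observes that its key input, \cite{AM-Lpackets}*{Theorem 1.1}, holds for pure real forms of arbitrary connected reductive groups, so the argument transfers verbatim to symplectic and special orthogonal groups. Your slightly more explicit unpacking (spanning set of stable sums of standard modules, Shelstad's character identities versus $\epsilon^{\ast}$ and the pairing) is consistent with, and elaborates on, what the paper leaves implicit.
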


Let us go back to the proof of the ordinary endoscopic transfer
identities in \cite{ABV}.
The next step is to introduce a sheaf-theoretic version of the
representation $\tau_{\psi_{G}}^{\mathrm{Ar}}(\tilde{\pi})$ given in
(\ref{artprobb}).  
This is done using deep theorems in microlocal analysis which 
allow  one to express $\chi^{\mathrm{mic}}_{S_{\psi_{G}}}$
as the rank of a local system on a conormal bundle of $\XO$. 
More precisely, it is proven in \cite{ABV}*{Theorem 24.8} that to each
perverse sheaf $P$ on $\XO$ 
there is attached an ${}^{\vee}G$-equivariant local system 
\begin{equation}\label{eq:Qmic}
 Q^{\mathrm{mic}}(P)
\end{equation}
of complex vector spaces on a subset of the
conormal bundle to the ${}^{\vee}G$-action
$T^{\ast}_{{}^{\vee}G}\left(\XO\right)$ (\cite{ABV}*{Equation
  (19.1)(d)}).  Furthermore, for each  
$^{\vee}G$-orbit $S \subset \XO$ the rank of $Q^{\mathrm{mic}}(P)$ at
any non-degenerate point 
$(y,\nu)$ of $T^{\ast}_{S}\left(\XO\right)$ is equal to the microlocal
multiplicity of $P$ along 
$S$, \emph{i.e.}
$$
\dim \left(Q^{\mathrm{mic}}(P)_{y,\nu}\right)\, =\, \chi^{\mathrm{mic}}_{S}(P).
$$
As explained in \cite{ABV}*{Corollary 24.9},
the restriction of $Q^{\mathrm{mic}}(P)$ to $T^{\ast}_{S}(\XO)$
may be represented by a finite-dimensional representation
$\tau^{\mathrm{mic}}_{S}(P)$ 
of the micro-component group (\cite{ABV}*{Definition 24.7})   
$$
A_S^{\mathrm{mic}}={}^{\vee}G_{y,\nu}/\left({}^{\vee}G_{y,\nu}\right)_0,\quad
(y,\nu)\in T^{\ast}_{S}\left(\XO\right), 
$$
verifying
\begin{equation} \label{dimtau}
\dim
\left(\tau_{\psi_G}^{\mathrm{mic}}(P)\right)\ =\ \chi_{S_{\psi_G}}^{\mathrm{mic}}(P).
\end{equation}
We point out that by \cite{ABV}*{Lemma 24.3}, $A_S^{\mathrm{mic}}$ is
independent of the 
choice of $(y,\nu)$. Furthermore, when $S = S_{\psi_{G}}$ as in (\ref{SpsiG}) 
$$
A_{\psi_G}=A_{S_{\psi_G}}^{\mathrm{mic}},
$$
(\cite{ABV}*{Proposition 22.9, Definition 24.7}).
This permits us to define the representation
\begin{equation} \label{ABVprobb}
\tau_{\psi_{G}}^{\mathrm{ABV}}(\pi(\xi)) = \tau_{S_{\psi_G}}^{\mathrm{mic}}(P(\xi)), \quad
\xi \in \Xi({}^{\vee}\mathcal{O},{}^{\vee}G^{\Gamma}).
\end{equation}
With this notation, Equation  (\ref{dimtau}) and pairing
(\ref{prepair}) combine to produce the following decomposition of the
virtual character $\eta^{\mathrm{ABV}}_{\psi_{G}}(\delta)$ of (\ref{mmm2}) 
\begin{equation*}
\label{equationc}
\eta_{\psi_{G}}^{\mathrm{ABV}}(\delta) = \sum_{\pi(\xi) \in 
\Pi_{\psi_{G}}^{\mathrm{ABV}}(\delta)}
(-1)^{dS_\xi - dS_{\psi_{G}}}
\ \dim\left(\tau^{\mathrm{ABV}}_{\psi_{G}}(\pi(\xi)) \right) \, \pi(\xi).
\end{equation*}
Here,  $dS=\dim (S)$ for each ${}^{\vee}G$-orbit $S$. 
In addition, for each $\bar{s}\in A_{\psi_{G}^{}}$ there is a virtual character
\begin{equation}\label{eq:etaABVdef}
\eta_{\psi_{G}}^{\mathrm{ABV}}(\delta)(\bar{s})\  =\ 
\sum_{\pi(\xi) \in \Pi_{\psi_{G}}^{\mathrm{ABV}}(\delta)}
(-1)^{dS_\xi - dS_{\psi_{G}}}
\ \mathrm{Tr}\left(\tau^{\mathrm{ABV}}_{\psi_{G}^{}}(\pi(\xi))(\bar{s})
\right) \, \pi(\xi),
\end{equation}
which resembles $\eta_{\psi_{G}}^{\mathrm{Ar}}(\delta_{q})(\bar{s})$ in
(\ref{eq:spectraltransferidentity2}). 
The ordinary endoscopic transfer identity (\cite{ABV}*{Theorem 26.25})
takes  the form
\begin{equation}
\label{equationd}
\mathrm{Lift}_{H(\R)}^{G(\R,\delta)}\left(\eta_{\psi_{H}}^{\mathrm{ABV}}(H(\R))\right)
= \eta_{\psi_{G}}^{\mathrm{ABV}}(\delta)(\bar{s}). 
\end{equation}

It is natural to seek a sheaf-theoretic analogue of the twisted
endoscopic transfer identity (\ref{spectrans}) as well.  This is
proven in \cite{AAM}. Indeed, following \cite{ABV},
a sheaf-theoretic version of the theory of twisted endoscopy is
introduced  in \cite{Christie-Mezo}.  This is  used in
\cite{AAM}*{Equation (134)} to give a sheaf-theoretic version of the
twisted transfer map (\ref{eq:twistedtransfer})
$$\text{Lift}_{G(\R,\delta_q)}^{\mathrm{GL}_N(\R)\rtimes \vartheta}:
K_{\mathbb{C}} \Pi(G(\R,\delta_q))^{st} \longrightarrow
K_{\mathbb{C}} \Pi(\mathrm{GL}_N(\R)\rtimes \vartheta).$$
The twisted endoscopic transfer identity takes the form
$$
\mathrm{Lift}_{{G(\R,\delta_q)}}^{\mathrm{GL}_N(\R)\rtimes
  \vartheta}\left(\eta_{\psi_{G}}^{\mathrm{ABV}}(\delta_q)\right) =
\mathrm{Tr}_{\vartheta} \left( 
\eta_{\psi}^{\mathrm{ABV}}(\delta_q)^{{\thicksim}} \right), 
$$
where $\psi$ is as in (\ref{prepsitilde}), 
and $\eta_{\psi}^{\mathrm{ABV}}(\delta_q)^{{\thicksim}}$ is the virtual
character obtained from $\eta_{\psi}^{\mathrm{ABV}}(\delta_q)$
after canonically extending each $\pi\in
\Pi_{\psi}^{\mathrm{ABV}}(\mathrm{GL}_N(\R))$ to a representation of  
$\mathrm{GL}_N(\R)\times\left<\vartheta\right>$.


We end this section by assuming that
$G= \mathrm{SO}_{N}$, $N$ even,  and describing the effect of the action of
$\mathrm{Out}_N(G)$ on the  objects involved in the
definition of the Arthur packets in \cite{ABV}. 
As in the previous section, $\mathrm{Out}_N(G)\cong\left<w\right>$ 
with $w \in \mathrm{O}_{N}$ as in (\ref{eq:representativeON}).
The outer automorphism $\mathrm{Int}(w)$ on $G$
induces a natural bijection
\begin{equation}\label{eq:maponparameters}
\mathrm{Int}({w}):\, X\left(\O,
{^\vee}G^{\Gamma}\right) \, \longrightarrow\,  X\left({w}\cdot \O,
{^\vee}G^{\Gamma}\right),
\end{equation}
which by \cite{ABV}*{Proposition 7.15(a)},
induces a bijection of complete geometric parameters 
$$
w^{\ast}: 
 \Xi\left({w}\cdot \O,
{^\vee}G^{\Gamma}\right)\,  \longrightarrow\, 
\Xi\left(\O,
{^\vee}G^{\Gamma}\right).
$$
Moreover, by \cite{ABV}*{Proposition 7.15(b)} the inverse image
functor attached to  
(\ref{eq:maponparameters})
$$
\mathrm{Int}({w})^{\ast}:\mathcal{P}\left(X\left({w}\cdot \O,
{^\vee}G^{\Gamma}\right)\right)\, \longrightarrow\,  \mathcal{P}\left(X\left(\O,
{^\vee}G^{\Gamma}\right)\right)
$$
is a fully faithful exact functor, satisfying 
\begin{equation}\label{eq:inverseimage-on-irrperverse}
\mathrm{Int}({w})^{\ast}(P(\xi))=P(w^{\ast}\xi).
\end{equation}
Two more maps are induced by (\ref{eq:maponparameters}). The 
differential $\mathrm{Ad}(w)$ of (\ref{eq:maponparameters}) defines a
homeomorphism between the tangent bundles 
$$\mathrm{Ad}({w}) : T  \left( X\left(\O,
{^\vee}G^{\Gamma}\right) \right)  \, \longrightarrow \, T \left( X\left({w}\cdot
\O,{^\vee}G^{\Gamma}\right) \right), $$
and duality gives us a homeomorphism between the 
corresponding conormal bundles
$$\mathrm{Ad}^{*}({w}):T_{{^\vee}{G}}^{\ast} \left( X
\left(w\cdot\O,{^\vee}G^{\Gamma} \right) \right) \, \longrightarrow 
\, T_{{}^{\vee}{G}}^{\ast} \left( X\left(\O,{^\vee}G^{\Gamma}\right) \right)$$
such that for any ${}^{\vee}G$-orbit $S \subset \XO$, we have
$$\mathrm{Ad}^{*}({w}) \left(T_{\mathrm{Int}(w) S}^{\ast} \left(
X\left({w}\cdot\O_{G}, {^\vee}G^{\Gamma}\right)\right) \right)= 
T_S^{\ast} \left( X\left(\O_{G},{^\vee}G^{\Gamma}\right) \right).$$
We can now describe the effect of $\mathrm{Out}(G)$ on ABV-packets. 
\begin{prop}\label{prop:OutGofPi}
Suppose $\psi_G$ is an Arthur parameter
of $G = \mathrm{SO}_{N}$, and  $N$ is even. Let $w$ be as in
(\ref{eq:representativeON}). 
Then
\begin{enumerate}[$($a$)$]
\item
$\tau^{\mathrm{ABV}}_{\mathrm{Int}({w})\,\circ\,\psi_{G}^{}}(\pi(w^{\ast}\xi))\, =\,
\tau^{\mathrm{ABV}}_{\psi_{G}^{}}(\pi(\xi))\circ \mathrm{Int}({w}),
\quad  \xi \in\XiO$ 

\item  \begin{align*}
\Pi_{\mathrm{Int}({w})\,\circ\,\psi_{G}^{}}^{\mathrm{ABV}}(\delta)\, &=\, 
\left\{\pi(w^{\ast}\xi)\ :\ 
\xi \in\XiO,\ 
\pi(\xi)\in \Pi_{\psi_{G}^{}}^{\mathrm{ABV}}(\delta)\right\}\nonumber\\
\, &=\, 
\left\{\pi(\xi)\circ \mathrm{Int}({w})\ :\ \xi \in\XiO,\ \pi(\xi)\in
\Pi_{\psi_{G}^{}}^{\mathrm{ABV}}(\delta)\right\}
\end{align*}

\item  $w \cdot \eta_{\psi_{G}}^{\mathrm{ABV}}(\delta)(\overline{s})
= \eta_{\mathrm{Int}(w) \circ  \psi_{G}}^{\mathrm{ABV}} (\delta)
(\mathrm{Int}(w)(\overline{s})).$
\end{enumerate}
\end{prop}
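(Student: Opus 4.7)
The plan is to exploit the functoriality of every construction in \cite{ABV} with respect to the outer automorphism $\mathrm{Int}(w)$, using the battery of compatible bijections assembled in the paragraph preceding the proposition: the map $\mathrm{Int}(w)$ on geometric parameters in (\ref{eq:maponparameters}), the induced map $w^{\ast}$ on complete geometric parameters, the inverse image functor $\mathrm{Int}(w)^{\ast}$ on perverse sheaves (which satisfies $P(w^{\ast}\xi) = \mathrm{Int}(w)^{\ast}(P(\xi))$ by (\ref{eq:inverseimage-on-irrperverse})), and the homeomorphism $\mathrm{Ad}^{\ast}(w)$ on conormal bundles. The central observation is that $\mathrm{Int}(w) \circ \psi_{G}$ has infinitesimal character $w \cdot \O$ and associated ${}^{\vee}G$-orbit $S_{\mathrm{Int}(w)\circ \psi_{G}} = \mathrm{Int}(w)\cdot S_{\psi_{G}}$, and that $\mathrm{Int}(w)$ conjugates the relevant centralizers, inducing a group isomorphism $A_{\psi_{G}} \cong A_{\mathrm{Int}(w)\circ \psi_{G}}$ given by $\bar{s} \mapsto \mathrm{Int}(w)(\bar{s})$, which intertwines the respective micro-component groups.

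For part (a), I would trace through the definition $\tau^{\mathrm{ABV}}_{\psi_{G}}(\pi(\xi)) = \tau^{\mathrm{mic}}_{S_{\psi_{G}}}(P(\xi))$, which is realized on the stalk $Q^{\mathrm{mic}}(P(\xi))_{y,\nu}$ at a non-degenerate point $(y,\nu) \in T^{\ast}_{S_{\psi_{G}}}(\XO)$. Applying $\mathrm{Ad}^{\ast}(w)^{-1}$ carries this point to a non-degenerate point of the conormal bundle of $\mathrm{Int}(w)\cdot S_{\psi_{G}}$. The naturality of the microlocalization functor with respect to the ${}^{\vee}G$-equivariant diffeomorphism $\mathrm{Int}(w)$ yields a canonical identification $Q^{\mathrm{mic}}(\mathrm{Int}(w)^{\ast}P(\xi)) \cong \mathrm{Ad}^{\ast}(w)^{\ast}Q^{\mathrm{mic}}(P(\xi))$, and the resulting identification of stalks intertwines the action of $A^{\mathrm{mic}}_{\mathrm{Int}(w)\cdot S_{\psi_{G}}}$ with that of $A^{\mathrm{mic}}_{S_{\psi_{G}}}$ via conjugation by $\mathrm{Int}(w)$; rewriting this identity using $P(w^{\ast}\xi)=\mathrm{Int}(w)^{\ast}P(\xi)$ gives precisely (a).

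Parts (b) and (c) are then formal consequences. For (b), the defining formula (\ref{abvdef}) expresses $\Pi^{\mathrm{ABV}}_{\psi_{G}}(\delta)$ as those $\pi(\xi)$ for which $\chi^{\mathrm{mic}}_{S_{\psi_{G}}}(P(\xi)) \neq 0$; by (\ref{dimtau}) this nonvanishing is equivalent, via (a), to $\chi^{\mathrm{mic}}_{S_{\mathrm{Int}(w)\circ\psi_{G}}}(P(w^{\ast}\xi)) \neq 0$, and the bijection $\xi \leftrightarrow w^{\ast}\xi$ combined with $\pi(w^{\ast}\xi) = \pi(\xi)\circ \mathrm{Int}(w)$ yields both descriptions of the ABV-packet, using that $G(\R,\delta)$ is preserved by $\mathrm{Int}(w)$. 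For (c), I would substitute (a) into (\ref{eq:etaABVdef}) for $\eta^{\mathrm{ABV}}_{\mathrm{Int}(w)\circ \psi_{G}}(\delta)(\mathrm{Int}(w)(\bar{s}))$, noting that $dS_{w^{\ast}\xi}=dS_{\xi}$ and $dS_{\mathrm{Int}(w)\circ\psi_{G}}=dS_{\psi_{G}}$ because $\mathrm{Int}(w)$ is a homeomorphism, and then reindex the sum via $\xi \leftrightarrow w^{\ast}\xi$; the resulting sum is exactly $w \cdot \eta^{\mathrm{ABV}}_{\psi_{G}}(\delta)(\bar{s})$. The main technical obstacle is the first step: verifying the ${}^{\vee}G$-equivariant naturality of the functor $P \mapsto Q^{\mathrm{mic}}(P)$ with respect to $\mathrm{Int}(w)$, which requires revisiting the construction in \cite{ABV}*{Chapter 24} and checking that each ingredient -- the conormal variety, the specialization to the link, and the local systems therein -- transforms equivariantly under $\mathrm{Ad}^{\ast}(w)$.
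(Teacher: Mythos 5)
Your plan follows essentially the same route as the paper: transport everything through the compatible maps $\mathrm{Int}(w)$, $w^{\ast}$, $\mathrm{Int}(w)^{\ast}$, $\mathrm{Ad}^{\ast}(w)$, establish equivariance of the microlocal local system, get (a) from this equivariance, then derive (b) from the nonvanishing criterion (\ref{abvdef}) via (\ref{dimtau}), and (c) by substituting (a) and (b) into (\ref{eq:etaABVdef}) and reindexing. That is exactly the paper's skeleton, and your treatments of (b) and (c) match the paper's in all essentials.

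The one place you stop short is the step you yourself flag as the ``main technical obstacle'': verifying $Q^{\mathrm{mic}}(\mathrm{Int}(w)^{\ast}P)\cong \mathrm{Ad}^{\ast}(w)^{\ast}\,Q^{\mathrm{mic}}(P)$. You say this ``requires revisiting the construction in \cite{ABV}*{Chapter 24},'' but you do not carry it out; the paper's proof is precisely that verification. It uses the explicit stalk description of $Q^{\mathrm{mic}}(P(\xi))_{y,\nu}$ as the relative hypercohomology $H^{-\dim S}(J,K;P(\xi))=\mathbb{H}^{-\dim S}(J;Rh_!h^!P(\xi)_{|J})$ from \cite{ABV}*{(24.10)}, then pushes this through the homeomorphism $\mathrm{Int}(w)$ using the elementary identities $R(w\cdot h\cdot w)_!=\mathrm{Int}(w)_\ast\, Rh_!\,\mathrm{Int}(w)_\ast$ and $(w\cdot h\cdot w)^!=\mathrm{Int}(w)^\ast\, h^!\,\mathrm{Int}(w)^\ast$ (valid because $h$ and $\mathrm{Int}(w)$ are open embeddings and $\mathrm{Int}(w)^2=\mathrm{Id}$), together with $R\mathrm{Int}(w)_\ast\,\mathrm{Int}(w)^\ast=\mathrm{Id}$. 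This produces the required stalk identification, after which \cite{ABV}*{Proposition 7.18} delivers the intertwining of the $A^{\mathrm{mic}}$-actions that you assert. So the proposal is correct in structure and correctly locates the crux, but the crux itself remains unverified in your write-up; to make it a complete proof you would need to supply the hypercohomology/six-functor manipulation above (or an equivalent argument for the naturality of $P\mapsto Q^{\mathrm{mic}}(P)$).
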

\begin{proof}
  Let $\xi\in \Xi({^\vee}\mathcal{O},{^\vee}G^{\Gamma})$ and let
  $Q^{\mathrm{mic}}(P(\xi))$ be the ${}^{\vee}G$-equivariant local system on 
a subset of $T^{\ast}_{{}^{\vee}G}\left(\XO\right)$ as in (\ref{eq:Qmic}).  
The stalks
of $ Q^{\mathrm{mic}}(P(\xi))$ at a point $(y,\nu)\in
T^{\ast}_{S}\left(\XO\right)$ are given in \cite{ABV}*{(24.10)(b)} by
the relative hypercohomology  
$$
Q^\mathrm{mic}(P(\xi))_{y,\nu}=H^{-\dim S}(J,K;P(\xi)).
$$
Here, $(J,K)$ is a pair of compact subspaces
of $\XO$  with $K\subset J$ (\cite{ABV}*{(24.10) (a)}). Let 
$h:\, J-K\, \hookrightarrow\, J$  be the 
inclusion and let $P(\xi)_{|J}$ be the restriction of $P(\xi)$ to $J$. 
By definition
$$
H^{\ast}\left(J,K;P(\xi)\right)=
\mathbb{H}^{\ast}\left(J;Rh_{!}\, h{^!}P(\xi)_{|J} \right).
$$
We wish to transfer these objects using the
homeomorphisms induced by $w= w^{-1}$.  Let us write
$w\cdot h \cdot w\, =\, \mathrm{Int}(w)\circ h \circ \mathrm{Int}(w)$
for short.  Then clearly $w \cdot h \cdot w$ is the inclusion of the
$w$-conjugate of $J-K$ into the  $w$-conjugate of $J$.
Together with (\ref{eq:inverseimage-on-irrperverse}) we deduce
\begin{align*}
&  Q^\mathrm{mic}(P(w^{\ast}\xi))_{\mathrm{Int}(w)(y),\mathrm{Ad}^{*}(w)(\nu)}\\
  &=Q^\mathrm{mic} \left( \mathrm{Int}(w)^{\ast}P(\xi)
  \right)_{\mathrm{Int}(w)(y),\mathrm{Ad}^{*}(w)(\nu)}\\ 
&= H^{-\dim ({\mathrm{Int}(w)S})} \left( \mathrm{Int}(w)(J), \mathrm{Int}(w)(K);
\mathrm{Int}(w)^{\ast}P(\xi)_{|J}  \right)\\
&= \mathbb{H}^{-\dim ({\mathrm{Int}(w)S})} \left(
\mathrm{Int}(w)(J); R(w\cdot h\cdot w)_{!}(w\cdot h\cdot w){^!}
\mathrm{Int}(w)^{\ast}P(\xi)_{|J} \right).
\end{align*}
In order to simplify this expression we make some observations about
the functors which appear in it.  Since $\mathrm{Int}(w)$ is a
homeomorphism satisfying 
$\left(\mathrm{Int}(w)\right)^2=\mathrm{Id}$, it follows that
$$\mathrm{Int}(w)_{!} = \mathrm{Int}(w)_{*},\quad \mathrm{Int}(w)^{*} =
\mathrm{Int}(w)^{!},\quad \ R\mathrm{Int}(w)_{*}\,
\mathrm{Int}(w)^{*} = \mathrm{Id}$$
as long as one keeps track of the domains and codomains of the
functors.  Additionally,
since both $h$ and $\mathrm{Int}(w)$ are open embeddings
$$
R(w\cdot h \cdot w)_{!} =
R\mathrm{Int}(w)_!\circ Rh_! \circ R\mathrm{Int}(w)_!  
 =\mathrm{Int}(w)_\ast \circ Rh_! \circ \mathrm{Int}(w)_\ast$$
and
$$(w\cdot h \cdot w)^! =
\mathrm{Int}(w)^!\circ h^! \circ \mathrm{Int}(w)^!  
\, =\,
\mathrm{Int}(w)^\ast\circ h^! \circ \mathrm{Int}(w)^\ast.$$
(\cite{Achar}*{Proposition 1.3.7}).
Hence,
\begin{align*}
  & \mathbb{H}^{-\dim  ({\mathrm{Int}(w)S})}
  \left(\mathrm{Int}(w)(J); R(w\cdot h\cdot w)_{!}(w\cdot h\cdot w){^!}
  \mathrm{Int}(w)^{\ast}P(\xi)_{|J} \right)\\ 
&=
\mathbb{H}^{-\dim ({\mathrm{Int}(w)S})} \left(
\mathrm{Int}(w)(J); R\mathrm{Int}(w)_\ast Rh_!h^!P(\xi)_{|J} \right)\\
&\cong
\mathbb{H}^{-\dim ({\mathrm{Int}(w)S})}\left(J; Rh_!   h^!P(\xi)_{|J} \right)\\
&= H^{-\dim (S)}(J,K; P(\xi))\\
&=Q^{\mathrm{mic}}(P(\xi))_{y,\nu}.
\end{align*}
This implies that
\begin{equation}\label{transQmic}
(\mathrm{Ad}^{*}(w))^{\ast} Q^\mathrm{mic}(P(\xi))\, =\, 
Q^\mathrm{mic}(P(w^{\ast}\xi)),
\end{equation}
where $(\mathrm{Ad}^{*}(w))^{\ast}$ is the inverse image functor of
$\mathrm{Ad}^{*}(w)$.  
The representation $\tau_{\psi_{G}}^{\mathrm{ABV}}(\pi(\xi))$ of
$A_{\psi_{G}}$ is determined by the local system $Q^{\mathrm{mic}}(P(\xi))$ on the
left.  An application of 
\cite{ABV}*{Proposition 7.18} to (\ref{transQmic}) tells us that  
the representation of $A_{\mathrm{Int}(w)\circ\psi_G}$ determined by
the local system $Q^\mathrm{mic}(P(w^{\ast}\xi))$ is
$\tau^{\mathrm{ABV}}_{\psi_{G}^{}}(\pi(\xi))\circ \mathrm{Int}(w)$.   This
proves (a).

For part (b) we observe that
\begin{align*}
  \chi^{\mathrm{mic}}_{S_{\mathrm{Int}(w)\circ\psi_G}}(P(w^{\ast}\xi))\ &=\
  \dim \left(
Q^{\mathrm{mic}}(P(w^{\ast}\xi))_{\mathrm{Int}(w)(y),\mathrm{Ad}^{*}(w)(\nu)}
\right)\\
&= \   \dim \left(Q^{\mathrm{mic}}(P(\xi))_{y,\nu}\right) \\
&=\ \chi^{\mathrm{mic}}_{S_{\psi_G}}(P(\xi)).
\end{align*}
Assertion (b) now follows from  definition (\ref{abvdef}).

Finally, by parts (a) and (b), we may write 
\begin{align*}
w \cdot \eta_{\psi_{G}}^{\mathrm{ABV}}(\delta)(\overline{s})\ 
&= 
\sum_{\pi(\xi) \in \Pi_{\psi_{G}}^{\mathrm{ABV}}(\delta)}
(-1)^{dS_\xi - dS_{\psi_{G}}}
\ \mathrm{Tr}\left(\tau^{\mathrm{ABV}}_{\psi_{G}^{}}(\pi(\xi))\circ
\mathrm{Int}(w)\left(\mathrm{Int}(w)(\bar{s})\right) \right) \,
w \cdot \pi(\xi)\\  
&=\,
\sum_{\pi(\xi) \in \Pi_{\psi_{G}}^{\mathrm{ABV}}(\delta)}
(-1)^{dS_{w^\ast\xi} - dS_{\mathrm{Int}(w)\circ \psi_G}}
\ \mathrm{Tr} \left(
\tau^{\mathrm{ABV}}_{\mathrm{Int}(w)\,\circ\,\psi_{G}^{}}
(\pi(w^{\ast}\xi))\left(\mathrm{Int}(w)(\overline{s}) \right) 
\right) \, \pi(w^{\ast}\xi)\\  
 &=\, \eta_{\mathrm{Int}(w)\circ
  \psi_G}^{\mathrm{ABV}}(\delta)(\mathrm{Int}(w)(\overline{s})). 
\end{align*}
\end{proof}

\subsection{The comparison of the approaches}

We wish to compare
$\eta^{\mathrm{Ar}}_{\psi_G^{}}(\delta)(\bar{s})$, defined in
(\ref{eq:vHtovG}), 
with $\eta^{\mathrm{ABV}}_{\psi_G^{}}(\delta)(\bar{s})$, defined in
(\ref{eq:etaABVdef}).  When $G$ is an even rank special orthogonal
group, the  former distribution is only defined on 
$\mathrm{Out}_{N}(G)$-orbits (\ref{eq:representativeON}), and so the
most we can hope for is an 
identity of the form
\begin{equation} \label{eq1}
\eta^{\mathrm{Ar}}_{\psi_G^{}}(\delta)(\bar{s}) = \mathrm{Out}_{N}(G)
\cdot \eta^{\mathrm{ABV}}_{\psi_G^{}}(\delta)(\bar{s}).
\end{equation}
The proof of this identity is the goal of this section.  Once this is
established, the identity 
$$\widetilde{\Pi}_{\psi_G^{}}^{\mathrm{Ar}}(\delta)\, =\,
\mathrm{Out}_N(G)\cdot \Pi_{\psi_G^{}}^{\mathrm{ABV}}(\delta)$$
is immediate.  It is then also not difficult to prove  
$$\quad \tau^{\mathrm{Ar}}_{\psi_G^{}}(\widetilde{\pi})=
\tau^{\mathrm{ABV}}_{\psi_G^{}}(\pi), \quad \pi \in
\Pi_{\psi_G^{}}^{\mathrm{ABV}}(\delta) $$
where $\widetilde{\pi} = \mathrm{Out}_{N}(G) \cdot
\pi$ (\emph{cf.}~(\ref{artprobb}) and (\ref{ABVprobb})).

The main objective of \cite{AAM} was to prove these identities in the case of 
$\delta = \delta_q$ a quasisplit pure real form of $G$.
This was done by first proving that the two versions of twisted endoscopic
transfer agreed (on $\mathrm{Out}_{N}(G)$-orbits), \emph{i.e.}
$\Lift_{{G(\R,\delta_q)}}^{\GL_N(\mathbb{R}) \rtimes 
  \vartheta}=\Trans_{{G(\R,\delta_q)}}^{\GL_N(\mathbb{R}) \rtimes
  \vartheta}$
(\cite{AAM}*{Corollary 7.10}).  Then it was proved that 
$$\mathrm{Lift}_{{G(\R,\delta_q)}}^{\mathrm{GL}_N(\R)\rtimes \vartheta}
\left(\eta_{\psi_{G}}^{\mathrm{ABV}}(\delta_q)\right) =
\mathrm{Tr}_{\vartheta} \left(
\eta_{\psi}^{\mathrm{ABV}}(\delta_q)^{\thicksim} \right)  =
\mathrm{Tr}_{\vartheta} \left( \pi_{\psi}^{\thicksim} \right) =
\mathrm{Lift}_{{G(\R,\delta_q)}}^{\mathrm{GL}_N(\R)\rtimes \vartheta}
\left(\eta_{\psi_{G}}^{\mathrm{Ar}}(\delta_q)\right)$$
(\cite{AAM}*{Proposition 6.3, Theorem 9.3}). By using the injectivity
of  $\mathrm{Lift}_{{G(\R,\delta_q)}}^{\mathrm{GL}_N(\R)\rtimes
  \vartheta}$ on $\mathrm{Out}_{N}(G)$-orbits one obtains 
(\ref{eq1})  for $\delta= \delta_{q}$ and $\bar{s} =
1$.  The extension to arbitrary $\bar{s} \in A_{\psi_{G}}$ follows from
a comparison of ordinary endoscopy (\cite{AAM}*{Section 10}).
In what follows, we will review the comparison of ordinary endoscopy
and continue using it to achieve identity (\ref{eq1})  for
arbitrary pure real forms $\delta$ of $G$.

Let $\bar{s}\in A_{\psi_G^{}}$ and fix  a semisimple representative
$s\in {}^{\vee}G$ of $\bar{s}$. As in the previous sections, we write
$H(\mathbb{R})$  for the quasisplit endoscopic group  whose 
dual ${^\vee}H$ is 
the identity component of the centralizer in ${^\vee}G$ of $s$. Recall
that using  (\ref{eq:natural-embeding}) there is an Arthur parameter
$\psi_{H}$ for $H$ such that $\psi_{G}^{} = \epsilon \circ \psi_{H}^{}.$

An intermediate step towards to proving (\ref{eq1})
is to prove analogous  identities for the endoscopic group
$H(\mathbb{R})$. More precisely, we wish to prove an identity of the form   
\begin{align}\label{eq:etaAr=etaABVodd}
\eta_{\psi_H}^{\mathrm{Ar}}(H(\R,\delta_q)) \,=\, \left(
\times_{i=1,2}\mathrm{Out}_{N_i}(H_i) \right)\cdot
\eta_{\psi_H}^{\mathrm{ABV}}(H(\mathbb{R})), 
\end{align}  
with $H_i,\ i=1,2$ defined in Equation (\ref{eq:H1xH2}) below. 
In \cite{AAM}*{Section 10} this identity is verified in the case of $G
= \mathrm{SO}_N$, $N$ odd.  The other cases were left as exercises.
We complete these exercises here.  

Suppose $N$ is even and
$G$ equals to  $\mathrm{Sp}_N$ or $\mathrm{SO}_N$. The explicit
description in \cite{Arthur}*{(1.4.8)} 
of the centralizer  in ${^\vee}G$ of the image of $\psi_{G}$ 
makes it clear that every element $\bar{s} \in A_{\psi_{G}}$ has a
diagonal representative $s$ in the centralizer with eigenvalues $\pm
1$.   Hence, as mentioned in  
\cite{Arthur}*{\emph{pp.} 13-14},
the quasisplit endoscopic group $H(\mathbb{R})$ determined by $s$ is a direct
product $$H_1(\mathbb{R}) \times H_2(\mathbb{R}),$$  
in which 
\begin{align}\label{eq:H1xH2}
H_1\, =\, \mathrm{SO}_{N_1},\quad  
H_2\, =\, \left\{
\begin{array}{cl}
\mathrm{Sp}_{N_2}&\text{if }G=\mathrm{Sp}_N,\\
\mathrm{SO}_{N_2}&\text{if }G=\mathrm{SO}_N,
\end{array}
\right.\quad \ N_1,\, N_2\, \text{ even},\ N_1+N_2=N.
\end{align}
The following additional conditions hold:
\begin{itemize}  
\item[$\bullet$]For $G(\R,\delta_q)=\mathrm{Sp}_N(\R,\delta_q)$, the
  quasisplit group $H_1(\mathbb{R})= \mathrm{SO}_{N_1}(\R,\delta_q)$ can be
  either split or non-split.  
\item[$\bullet$] For $G(\R,\delta_q)=\mathrm{SO}_N(\R,\delta_q)$
  split, the quasisplit groups $H_i(\mathbb{R})=
  \mathrm{SO}_{N_i}(\R,\delta_q)$,  $i=1,2,$ are both split or both
  non-split. 
\item[$\bullet$] For $G(\R,\delta_q)=\mathrm{SO}_N(\R,\delta_q)$
  non-split, one of the two quasisplit groups $H_i(\mathbb{R})=
  \mathrm{SO}_{N_i}(\R,\delta_q)$, $i=1,2,$ is split and the other
  is non-split. 
\end{itemize}

Following \cite{Arthur}*{pp. 31, 36}, the Arthur parameter
$\psi_{H}^{}$ decomposes  as a product $\psi_{H_1}^{} \times
\psi_{H_2}^{}$ of Arthur parameters.
By \cite{Arthur}*{Theorem 2.2.1 (a)} the stable virtual character 
$\eta_{\psi_{H}}^{}(H(\R))$ is defined as the tensor product 
\begin{equation}\label{eq:etaAr=tensor-etaAra}
\eta_{\psi_{H}}^{\mathrm{Ar}}(H(\mathbb{R})) =
\eta_{\psi_{H_{1}}}^{\mathrm{Ar}} (H_{1}(\mathbb{R}))
\otimes \eta_{\psi_{H_{2}}}^{\mathrm{Ar}}(H_{2}(\mathbb{R})), 
\end{equation}    
where we recall that $\eta_{\psi_{H_i}}^{\mathrm{Ar}}(H_i(\R))$, 
for $H_{i}$ an even rank special orthogonal group, is defined as an orbit under
$\mathrm{Out}_{N_i}(H_i)\cong\mathrm{O}_{N_i}/\mathrm{SO}_{N_i}$.  This  
 is to say,  that  $\eta_{\psi_{H_i}}^{\mathrm{Ar}}(H_i(\R))$ is
 invariant under the action of the outer automorphisms 
 induced by the orthogonal group $\mathrm{O}_{N_i}$.  
We recall also that
$\mathrm{Out}_{N_2}(H_2)$ is the trivial group for $H_2$ a symplectic
group.

Let us move to the right-hand side of (\ref{eq:etaAr=etaABVodd}). 
An argument similar to the one implemented in the proof of
\cite{AAM}*{Corollary 6.2}  
permits us to obtain a decomposition 
\begin{align}\label{eq:eta=tensor-eta}
\eta_{\psi_{H}}^{\mathrm{ABV}}(H(\R))\, =\,  
\eta_{\psi_{H_1}^{}}^{\mathrm{ABV}}(H_1(\R)) \otimes
\eta_{\psi_{H_2}^{}}^{\mathrm{ABV}}(H_2(\R)). 
\end{align}

Identity (\ref{eq:etaAr=etaABVodd}) follows from  the
previous two  identities and \cite{AAM}*{Theorem 9.3}. Indeed, for
$H_2$ a symplectic group,  \cite{AAM}*{Theorem 9.3 (a)} tells us that 
$$\eta_{\psi_{H_2}^{}}^{\mathrm{Ar}}(H_2(\R)) \,=\,
\eta_{\psi_{H_2}^{}}^{\mathrm{ABV}} (H_2(\R)).$$
For $H_i,\, i=1,2$ an even rank special orthogonal group, 
Proposition \ref{prop:OutGofPi} (c)  with $\overline{s}=1$ tells us that
$$
\mathrm{Out}_{N_i}(H_i)\cdot \eta_{\psi_{H_i}^{}}^{\mathrm{ABV}}(H_i(\R))\ =\ \left\{
\eta_{\psi_{H_i}^{}}^{\mathrm{ABV}}(H_i(\R)),
\eta_{{^{\vee}}\mathrm{Int}(w)\circ \psi_{H_i}^{}}^{\mathrm{ABV}}(H_i(\R))
\right\}
$$
and from \cite{AAM}*{Theorem 9.3 (b)} we conclude
$$
\eta_{\psi_{H_i}^{}}^{\mathrm{Ar}}(H_i(\R)) \,=\,
\mathrm{Out}_{N_i}(H_i)\cdot
\eta_{\psi_{H_i}^{}}^{\mathrm{ABV}}(H_i(\R)). 
$$
Identity (\ref{eq:etaAr=etaABVodd}) is now immediate from the
decompositions of 
$\eta_{\psi_{H}^{}}^{\mathrm{Ar}}(H(\R))$ and 
$\eta_{\psi_{H}^{}}^{\mathrm{ABV}}(H(\R))$
given in equations
(\ref{eq:etaAr=tensor-etaAra})
and (\ref{eq:eta=tensor-eta}) respectively.

The comparison of distributions in  (\ref{eq:etaAr=etaABVodd}) for
endoscopic groups can be lifted to a comparison of distributions for
$G(\mathbb{R},\delta)$ by using ordinary endoscopic transfer.  The
details are given in  the following lemma.
\begin{lem}\label{lem:Out-of-eta} Let $\psi_G$ be an Arthur parameter for $G$. 
For any $\overline{s}\in A_{\psi_G}$, choose $H$ and $\psi_H$ as in
the beginning of this section. Then  
\begin{align}\label{eq:wLift=Liftw}
\mathrm{Lift}_{H(\mathbb{R})}^{G(\mathbb{R},\delta)} \left( \left(
\times_{i=1,2}\mathrm{Out}_{N_i}(H_i)\right)  \cdot 
\eta_{\psi_{H}^{}}^{\mathrm{ABV}}(H(\R))\right) \ =\  
\mathrm{Out}_N(G)\cdot \eta_{\psi_{G}}^{\mathrm{ABV}}(\delta)(\overline{s}).
\end{align}
\end{lem}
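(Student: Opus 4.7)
The starting point is the ordinary endoscopic transfer identity (\ref{equationd}),
\[
\Lift_{H(\R)}^{G(\R,\delta)}\!\bigl(\etaABV_{\psi_H}(H(\R))\bigr)\;=\;\etaABV_{\psi_G}(\delta)(\bar s),
\]
combined with Proposition \ref{prop:OutGofPi}, which describes how outer automorphisms act on the ABV distributions. Interpreting $(\times_{i=1,2}\mathrm{Out}_{N_i}(H_i))\cdot(\cdot)$ via the symmetrization $\tfrac14(1+w_{H_1})(1+w_{H_2})$ (and similarly for $\mathrm{Out}_N(G)$), the left-hand side of (\ref{eq:wLift=Liftw}) expands into four terms of the form $\Lift_{H(\R)}^{G(\R,\delta)}(\mathrm{Int}(w_H)\cdot\etaABV_{\psi_H}(H(\R)))$ indexed by $w_H=(w_{H_1}^a,w_{H_2}^b)$ with $a,b\in\{0,1\}$. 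Using the $H_i$-analogs of Proposition \ref{prop:OutGofPi}(c) factor-by-factor, together with the tensor decomposition (\ref{eq:eta=tensor-eta}), each such term rewrites as $\Lift_{H(\R)}^{G(\R,\delta)}(\etaABV_{\mathrm{Int}(w_H)\circ\psi_H}(H(\R)))$.

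The second step is to identify $\epsilon\circ\mathrm{Int}(w_H)\circ\psi_H$ modulo ${}^\vee G$-conjugacy. Because $\epsilon$ sits ${}^\vee H_1\times{}^\vee H_2$ block-diagonally inside the ambient orthogonal group containing ${}^\vee G$, the automorphism $\mathrm{Int}(w_H)$ is realized by conjugation by a block-diagonal element $\tilde w_H$ of this ambient group, and the membership of $\tilde w_H$ in ${}^\vee G$ is detected by the product of the two block determinants. When $\tilde w_H\in{}^\vee G$, the composition is ${}^\vee G$-conjugate to $\psi_G$, so by (\ref{equationd}) the term contributes $\etaABV_{\psi_G}(\delta)(\bar s)$. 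When $\tilde w_H$ lies in the non-trivial coset, the composition is conjugate to $\mathrm{Int}(w)\circ\psi_G$, and (\ref{equationd}) together with Proposition \ref{prop:OutGofPi}(c) shows that the term contributes $w\cdot\etaABV_{\psi_G}(\delta)(\bar s)$. Summing the four contributions with their symmetrization weights produces the right-hand side of (\ref{eq:wLift=Liftw}).

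The principal obstacle is the case-by-case verification that the four contributions are distributed in exactly the right proportion. For $G=\mathrm{SO}_N$ with $N$ even and both $H_1,H_2$ special orthogonal, the product of block determinants is $+1$ for $w_H\in\{(1,1),(w_{H_1},w_{H_2})\}$ and $-1$ for the remaining two, giving a clean $2$-$2$ split that yields $\tfrac12(1+w)\cdot\etaABV_{\psi_G}(\delta)(\bar s)$. For $G=\mathrm{Sp}_N$, only $w_{H_1}$ is non-trivial (since $H_2=\mathrm{Sp}_{N_2}$ has no outer automorphism) and $\mathrm{Out}_N(G)$ is trivial, in which case the det-$(-1)$ element of $\mathrm{O}_{N_1}$ representing $w_{H_1}$ can be paired with $-I$ in the odd-dimensional $H_2$-block to land in ${}^\vee G=\mathrm{SO}_{N+1}$, so every term is inner and collapses onto $\etaABV_{\psi_G}(\delta)(\bar s)$. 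The split-versus-non-split parities recorded in the three bullet points following (\ref{eq:H1xH2}) are exactly what guarantees this combinatorial matching in every subcase.
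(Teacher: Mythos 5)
Your proposal captures the essential structure of the paper's proof: apply Proposition \ref{prop:OutGofPi}(c) factor-by-factor together with the tensor decomposition (\ref{eq:eta=tensor-eta}), then determine which $\mathrm{Out}$-twists of $\psi_H$ become ${}^\vee G$-conjugate to $\psi_G$ (respectively to $\mathrm{Int}(w)\circ\psi_G$) after composing with $\epsilon$, the determining criterion being whether the block-diagonal representative lies in ${}^\vee G$. This is exactly the paper's argument, though the paper carries out the bookkeeping case by case rather than via the $\tfrac{1}{4}(1+w_{H_1})(1+w_{H_2})$ symmetrization (which the paper earlier declares equivalent). Your key observations — the $2$-$2$ split for $G=\mathrm{SO}_N$, $N$ even, and the trick for $G=\mathrm{Sp}_N$ of pairing $w_{H_1}\in\mathrm{O}_{N_1}$ with $-I_{N_2+1}$ on the odd-dimensional ${}^\vee H_2$-block so the representative lands in $\mathrm{SO}_{N+1}={}^\vee G$ — are precisely what the paper uses.

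Two small inaccuracies deserve mention. First, you do not explicitly treat $G=\mathrm{SO}_N$ with $N$ odd, where the $H_i$ are odd-rank special orthogonal groups, all $\mathrm{Out}$ groups are trivial, and (\ref{eq:wLift=Liftw}) reduces immediately to (\ref{equationd}); this is the paper's case a and is degenerate, so the omission is minor. Second, your closing sentence attributes the combinatorial matching to the split-versus-non-split parities in the bullet points following (\ref{eq:H1xH2}). That is not what drives the argument: the matching is governed by the determinants of the complex block-diagonal representatives, a statement about the complex dual groups independent of which real form of $H_i$ appears. The bullet points describe the real endoscopic data but play no role in deciding whether $\tilde w_H\in{}^\vee G$.
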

\begin{proof} The proof is done case-by-case, depending on the form of $G$.
\begin{enumerate} 
\item[a.] For $G=\mathrm{SO}_N$, $N$ odd, the groups $H_{1}$, $H_{2}$
  are odd rank special orthogonal groups
  (\cite{Arthur}*{\emph{p.}~13}).  Therefore, 
  $\times_{i=1,2}\mathrm{Out}_{N_i}(H_i)$ and $\mathrm{Out}_{N}(G)$
are trivial,  and  (\ref{eq:wLift=Liftw}) reduces to Equation
(\ref{equationd}).
\item[b.] Let $G=\mathrm{Sp}_N$. Then $\times_{i=1,2}\mathrm{Out}_{N_i}(H_i)$
is a group of order two whose generator may be represented by the element
${w}= 
{w}_1\times \mathrm{I_{N_2}}
 \in \mathrm{O}_{N_1} \times \mathrm{Sp}_{N_2},
$ with $w_1$ as in (\ref{eq:representativeON}). Similarly, the automorphism
 ${}^{\vee}\mathrm{Int}(w)$ dual  
to $\mathrm{Int}(w)$ may be represented by the element 
${}^{\vee}{w}= 
{w}_1\times \mathrm{(-I_{N_2+1})}
 \in \mathrm{O}_{N_1}\times \mathrm{O}_{N_2+1}$.
Now, by Proposition \ref{prop:OutGofPi} (c)  with 
$\overline{s}=1$ 
$$
\etaABV_{\psi_{H_1}^{}}(H_1(\R))  \circ \mathrm{Int}(w_1) =   
\etaABV_{\mathrm{Int}({}^{\vee}w_1) \circ \psi_{H_1}}(H_1(\R))
$$
and so decomposition (\ref{eq:eta=tensor-eta}) permits us to write
$$
\etaABV_{\psi_{H}^{}}(H(\R)) \circ \mathrm{Int}(w) =  
\etaABV_{\mathrm{Int}({}^{\vee}w) \circ \psi_{H}}(H(\R)).
$$
Moreover, for $G=\mathrm{Sp}_{N}$ the element ${}^{\vee}{w}$ 
belongs to ${^{\vee}}{G}=\mathrm{SO}_{N+1}(\mathbb{C})$.  Consequently 
the Arthur parameters
$$\psi_G\, =\, \epsilon \circ \psi_{H}\quad\text{and}\quad
\mathrm{Int}\left({{}^{\vee}w}\right)\circ\psi_G\ \ =\ \epsilon \circ
\left( \mathrm{Int}\left({{}^{\vee}w} \right) \circ \psi_{H}\right)$$
are in the same ${^{\vee}}G$-orbit,
and  the  ${^\vee}G$-conjugate elements $\overline{s}$ and
$\mathrm{Int}({}^{\vee}w)(\bar{s})$ correspond to the same endoscopic
group $H(\mathbb{R})$.
The ordinary endoscopic transfer (\ref{equationd}) therefore implies
\begin{align*}
\mathrm{Lift}_{H(\R)}^{G(\R,\delta)}\left(\eta_{\mathrm{Int}({}^{\vee}w)
  \circ \psi_{H}}^{\mathrm{ABV}}(H(\R))\right) 
\ &=\ \eta_{\mathrm{Int}({}^{\vee}w)\circ\psi_{G}}^{\mathrm{ABV}}(\delta)
(\mathrm{Int}({}^{\vee}w)(\overline{s}))\\  
\ &=\  \eta_{\psi_{G}}^{\mathrm{ABV}}(\delta)(\bar{s}) \\
\ &=\  \mathrm{Lift}_{H(\R)}^{G(\R,\delta)} \left(
\eta_{\psi_{H}}^{\mathrm{ABV}}(H(\R))\right). 
\end{align*}
Since $\mathrm{Out}_N(G)$ is trivial in this case, we have proved
(\ref{eq:wLift=Liftw}).
\item[c.] Let $G=\mathrm{SO}_N$ where $N$ is even. We follow the same
  reasoning as in b.  Now
$\times_{i=1,2}\mathrm{Out}_{N_i}(H_i)$ is a group of order four whose
  generators may be represented by  elements
$$
w={w}_1\times \mathrm{Id}_{N_2}
 \in \mathrm{O}_{N_1} \times 
  \mathrm{SO}_{N_2}\ \text{ and }\
w'=\mathrm{Id}_{N_1}\times {w}_2
 \in \mathrm{SO}_{N_1}\times \mathrm{O}_{N_2}, 
 $$
 where $w_{1}$ and $w_{2}$ are as in (\ref{eq:representativeON}).
The same pair of elements can be used as generators for the dual
automorphisms.
Proposition \ref{prop:OutGofPi} (c) with 
$\overline{s}=1$ 
gives us
$$
\etaABV_{\psi_{H_i}^{}}(H_i(\R)) \circ \mathrm{Int}(w_i) \, =\,  
\etaABV_{\mathrm{Int}(w_i) \circ \psi_{H_i}}(H_i(\R)),
$$
and through decomposition  (\ref{eq:eta=tensor-eta}) 
we are able to deduce
\begin{align*}
\etaABV_{\psi_{H}}(H(\R)) \circ \mathrm{Int}(ww') \, &=\,  
\etaABV_{\mathrm{Int}({ww'}) \circ \psi_{H}}(H(\R)),
\quad \text{and}\\
\etaABV_{\mathrm{Int}({w'}) \circ
  \psi_{H}}(H(\R)) \circ \mathrm{Int}(ww')\, &=\,   
\etaABV_{\mathrm{Int}({w}) \circ \psi_{H}}(H(\R)).
\end{align*}
In addition, the product $ww'$ belongs to ${^{\vee}}{G}=\mathrm{SO}_N(\mathbb{C})$ and
$$
\mathrm{Int}\left({w}\right)\, \circ\,  \psi_G = 
\mathrm{Int}\left({w} w'\right) \circ \left(
\mathrm{Int}\left({w'}\right)\, \circ\,  \psi_G\right). 
$$
Hence, the Arthur parameters
$$
\psi_G\, =\, \epsilon \, \circ\,  \psi_{H} \quad \mbox{ and } \quad
\mathrm{Int}({w} w') \circ \psi_{G}, =\, \epsilon
\circ\left(\mathrm{Int}({w} w') \circ \psi_{H}\right) 
$$
belong to the same ${}^{\vee}G$-orbit, and
$$
 \mathrm{Int}\left({w}\right)\, \circ\,  \psi_G\, =\, \epsilon\, \circ\, 
\left( \mathrm{Int}( {w}) \circ \psi_{H} \right) \quad \mbox{ and } \quad
\mathrm{Int}\left({w'}\right)\, \circ\,  \psi_G =\, \epsilon\, \circ\,
\left( \mathrm{Int}({w'}) \circ \psi_{H} 
\right)
$$
belong to the same ${}^{\vee}G$-orbit. As in case b. the
${^\vee}G$-conjugate elements,
$\bar{s}$ and  $\mathrm{Int}(w w')(\bar{s})$,
correspond to the same endoscopic group $H(\mathbb{R})$.  Similarly,
the ${^\vee}G$-conjugate elements,  $\mathrm{Int}(w)(\bar{s})$ and
$\mathrm{Int}(w')(\bar{s})$,
correspond to the same endoscopic group $H(\mathbb{R})$. Therefore,
the endoscopic transfer map (\ref{equationd}) yields
\begin{align*}
\mathrm{Lift}_{H(\R)}^{G(\R,\delta)}\left(\eta_{\mathrm{Int}(w
  w') \circ \psi_{H}}^{\mathrm{ABV}}(H(\R))\right) 
\ &=\ \eta_{\mathrm{Int}(w w') \circ
  \psi_{G}}^{\mathrm{ABV}}(\delta)(\mathrm{Int}(w
w')(\overline{s}))\\ 
\ &=\ \eta_{\psi_{G}}^{\mathrm{ABV}}(\delta)(\overline{s})\\
&=\
\mathrm{Lift}_{H(\R)}^{G(\R,\delta)}\left(\eta_{\psi_{H}}^{\mathrm{ABV}}(H(\R))\right),\
\end{align*}
and likewise
$$\mathrm{Lift}_{H(\R)}^{G(\R,\delta)}\left(\eta_{\mathrm{Int}(w)
  \circ \psi_{H}}^{\mathrm{ABV}}(H(\R))\right) 
 = \mathrm{Lift}_{H(\R)}^{G(\R,\delta)}\left(\eta_{\mathrm{Int}(w')
   \circ \psi_{H}}^{\mathrm{ABV}}(H(\R))\right).$$ 
Finally, by Proposition \ref{prop:OutGofPi} (c), we have 
$$
w \cdot \eta_{\psi_{G}}^{\mathrm{ABV}}(\delta)(\overline{s})=
\eta_{\mathrm{Int}(w)
  \circ\psi_{G}}^{\mathrm{ABV}}(\delta)(\mathrm{Int}(w)(\overline{s})). 
$$
Since $\mathrm{Out}_N(G)$
is a group of order two  generated by $w$, the desired identity
(\ref{eq:wLift=Liftw}) follows. 
\end{enumerate}
\end{proof}
We can now lift Lemma \ref{lem:Out-of-eta} to our main theorem. 
\begin{thm}\label{thm:abv=ar}
Let $\psi_G^{}$ be an Arthur parameter for $G$. Then for any pure real
form $\delta$
of $G$
$$
\, \eta_{\psi_G^{}}^{\mathrm{Ar}}(\delta)(\overline{s})\ =\ 
\mathrm{Out}_N(G)\cdot\left(\eta_{\psi_G^{}}^{\mathrm{ABV}}(\delta)
(\overline{s})\right),\quad \overline{s}\in A_{\psi_G^{}}. 
$$
In particular, 
$$
\widetilde{\Pi}_{\psi_G^{}}^{\mathrm{Ar}}(\delta)\, =\, 
\mathrm{Out}_N(G)\cdot \Pi_{\psi_G^{}}^{\mathrm{ABV}}(\delta).
$$
In addition, for any $\pi\in  \Pi_{\psi_G^{}}^{\mathrm{ABV}}(\delta)$
\begin{align*}
\tau^{\mathrm{Ar}}_{\psi_G^{}}(\widetilde{\pi})\, =\,
\tau^{\mathrm{ABV}}_{\psi_G^{}}(\pi), \quad
\tau^{\mathrm{Ar}}_{\psi_{G}^{}}(\widetilde{\pi})(s_{\psi_{G}^{}})\,
=\, (-1)^{dS_\pi - dS_{\psi_{G}^{}}}, 
\end{align*}
with $\widetilde{\pi}$ the $\mathrm{Out}_N(G)$-orbit of $\pi$,
and $S_\pi$ the ${}^{\vee}G$-orbit in 
$\X$ corresponding to the
L-parameter of $\pi$. 
\end{thm}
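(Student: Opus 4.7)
The plan is to reduce the first identity to the endoscopic group $H(\mathbb{R})$ via ordinary transfer, where identity (\ref{eq:etaAr=etaABVodd}) is already in hand, and then push the resulting $\mathrm{Out}$-orbits back up to $G$ using Lemma \ref{lem:Out-of-eta}. More precisely, I would begin from the defining equation (\ref{eq:vHtovG}),
$$\eta^{\mathrm{Ar}}_{\psi_G}(\delta)(\bar{s}) \;=\; e(\delta) \, \mathrm{Trans}^{G(\mathbb{R},\delta)}_{H(\mathbb{R})}\bigl( \eta^{\mathrm{Ar}}_{\psi_H}(H(\mathbb{R})) \bigr),$$
replace Trans by Lift using Lemma \ref{lem:trans=lift}, substitute (\ref{eq:etaAr=etaABVodd}) for $\eta^{\mathrm{Ar}}_{\psi_H}(H(\mathbb{R}))$, and apply Lemma \ref{lem:Out-of-eta} to reach
$$\eta^{\mathrm{Ar}}_{\psi_G}(\delta)(\bar{s}) \;=\; \mathrm{Out}_N(G) \cdot \eta^{\mathrm{ABV}}_{\psi_G}(\delta)(\bar{s}).$$
The identity of Arthur packets $\widetilde{\Pi}^{\mathrm{Ar}}_{\psi_G}(\delta) = \mathrm{Out}_N(G) \cdot \Pi^{\mathrm{ABV}}_{\psi_G}(\delta)$ then follows at once by reading off the supports of the two virtual characters.

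For the $\tau$-identities, I would substitute the expansions (\ref{eq:spectraltransferidentity2}) and (\ref{eq:etaABVdef}) into the two sides of the distribution identity above, apply $\mathrm{Out}_N(G)$ to the ABV side orbit by orbit, and then use the linear independence of the $\mathrm{Out}_N(G)$-orbits $\widetilde{\pi}$ as distribution characters of $G(\mathbb{R},\delta)$. For each orbit $\widetilde{\pi}$ and every $\bar{s} \in A_{\psi_G}$, matching coefficients will yield the class-function identity
$$\mathrm{Tr}\bigl( \tau^{\mathrm{Ar}}_{\psi_G}(\widetilde{\pi})(s_{\psi_G} \bar{s}) \bigr) \;=\; (-1)^{dS_\pi - dS_{\psi_G}} \, \mathrm{Tr}\bigl( \tau^{\mathrm{ABV}}_{\psi_G}(\pi)(\bar{s}) \bigr)$$
on $A_{\psi_G}$. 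Specializing at $\bar{s} = 1$ recovers $\tau^{\mathrm{Ar}}(\widetilde{\pi})(s_{\psi_G}) = (-1)^{dS_\pi - dS_{\psi_G}}$, and substituting back and dividing by this sign yields $\tau^{\mathrm{Ar}}(\widetilde{\pi}) = \tau^{\mathrm{ABV}}(\pi)$ as characters of $A_{\psi_G}$.

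The main obstacle is twofold. First, the Kottwitz sign $e(\delta)$ appearing in (\ref{eq:vHtovG}) must be tracked and shown to be consistent with the sign-free identity (\ref{equationd}) on the ABV side; since the ABV construction pins down its own normalization and the quasisplit comparison \cite{AAM}*{Theorem 9.3} fixes the case $\delta = \delta_q$, this essentially reduces to the sign compatibility of ordinary transfer already encoded in Lemma \ref{lem:Out-of-eta}. Second, extracting $\tau^{\mathrm{Ar}} = \tau^{\mathrm{ABV}}$ from a single functional equation in $\bar{s}$ requires that both be one-dimensional characters of the abelian $2$-group $A_{\psi_G}$, so that $\mathrm{Tr}(\tau^{\mathrm{Ar}}(\widetilde{\pi})(s_{\psi_G}\bar{s}))$ factors multiplicatively; this one-dimensionality is implicit in the tensor decomposition (\ref{eq:etaAr=tensor-etaAra}) and its ABV analogue (\ref{eq:eta=tensor-eta}). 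Once these points are settled, the case analysis already packaged in Lemma \ref{lem:Out-of-eta} covers both parities of $N$ and both types of $G$ uniformly, and the theorem follows.
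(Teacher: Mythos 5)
Your chain of identities -- starting from (\ref{eq:vHtovG}), replacing $\mathrm{Trans}$ by $\mathrm{Lift}$ via Lemma \ref{lem:trans=lift}, substituting (\ref{eq:etaAr=etaABVodd}), and applying Lemma \ref{lem:Out-of-eta} -- is exactly the route the paper takes, and the deduction of the packet identity and the coefficient-matching step via (\ref{eq:spectraltransferidentity2}) and (\ref{eq:etaABVdef}) also agree with the paper's argument.

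There is, however, one misstatement in your justification of the $\tau$-identities. You claim that extracting $\tau^{\mathrm{Ar}}_{\psi_G}(\widetilde{\pi}) = \tau^{\mathrm{ABV}}_{\psi_G}(\pi)$ from the class-function equation
$$\mathrm{Tr}\left( \tau^{\mathrm{Ar}}_{\psi_G}(\widetilde{\pi})(s_{\psi_G}\bar{s}) \right) = (-1)^{dS_\pi - dS_{\psi_G}}\, \mathrm{Tr}\left( \tau^{\mathrm{ABV}}_{\psi_G}(\pi)(\bar{s}) \right)$$
requires that $\tau^{\mathrm{Ar}}_{\psi_G}(\widetilde{\pi})$ and $\tau^{\mathrm{ABV}}_{\psi_G}(\pi)$ be one-dimensional. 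That is neither required nor generally true: by (\ref{artprobb}), $\tau^{\mathrm{Ar}}_{\psi_G}(\widetilde{\pi})$ is a direct sum of the characters $\langle\cdot,\sigma\rangle$ with multiplicities $m(\sigma,\widetilde{\pi})$, and $\dim\tau^{\mathrm{ABV}}_{\psi_G}(\pi) = \chi^{\mathrm{mic}}_{S_{\psi_G}}(P(\xi))$ can likewise exceed $1$. What the argument actually uses is that $A_{\psi_G}$ is a finite abelian $2$-group, so its irreducible characters are one-dimensional (hence multiplicative) and linearly independent; writing both traces as non-negative integral combinations of these characters and using $\chi(s_{\psi_G}\bar{s}) = \chi(s_{\psi_G})\chi(\bar{s})$, linear independence forces the multiplicities to agree character by character and simultaneously pins down $\chi(s_{\psi_G}) = (-1)^{dS_\pi - dS_{\psi_G}}$ on every character occurring with nonzero multiplicity. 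Merely specializing at $\bar{s}=1$, as you suggest, gives only an identity of dimensions and does not by itself recover either $\tau$-identity without this linear-independence step. Aside from this, your remark about tracking the Kottwitz sign $e(\delta)$ is a fair caution, though the paper silently suppresses it in the displayed chain.
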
 
\begin{proof} 
Let $\bar{s}\in A_{\psi_G^{}}$. By Lemma \ref{lem:trans=lift}, Equation (\ref{eq:etaAr=etaABVodd}) 
 and Lemma \ref{lem:Out-of-eta}, we deduce
\begin{align*}
\eta_{\psi_G^{}}^{\mathrm{Ar}}(\delta)(\overline{s})&\ =\ 
\mathrm{Trans}_{H(\R)}^{G(\R,\delta)}(\eta_{\psi_{H}}^{\mathrm{Ar}}(H(\R)))\\
&\ =\ \mathrm{Lift}_{H(\R)}^{G(\R,\delta)}(\eta_{\psi_{H}}^{\mathrm{Ar}}(H(\R)))\\
&\ =\ \mathrm{Lift}_{H(\R)}^{G(\R,\delta)}\left(\left(\times_{i=1,2}\mathrm{Out}_{N_i}(H_i)\right)\cdot\eta_{\psi_{H}}^{\mathrm{ABV}}(H(\R))\right)\\ 
&\ =\ \mathrm{Out}_N(G)\cdot \eta_{\psi_G^{}}^{\mathrm{ABV}}(\delta)(\bar{s}).
\end{align*}
This proves the first assertion, and the identity of the Arthur packets
follows immediately.  Let $\widetilde{\pi} = \mathrm{Out}_N(G) \cdot
\pi$.  Then we may write
\begin{align*}
\sum_{\widetilde{\pi} \in \widetilde{\Pi}_{\psi_{G}^{}}^{\mathrm{Ar}}(\delta)} 
\mathrm{Tr} \left( \tau^{\mathrm{Ar}}_{\psi_{G}^{}} (\widetilde{\pi})
(s_{\psi_{G}}\bar{s}) \right)\,  \widetilde{\pi}
\ =\ \sum_{ \widetilde{\pi} \in \mathrm{Out}_N(G) \cdot \Pi_{\psi_{G}^{}}^{\mathrm{ABV}}(\delta)}
(-1)^{dS_\pi - dS_{\psi_{G}^{}}}
\ \mathrm{Tr}\left(\tau^{\mathrm{ABV}}_{\psi_{G}^{}}(\pi)(\bar{s})
\right) \, \widetilde{\pi}.
\end{align*} 
Since $\widetilde{\Pi}(G(\mathbb{R},\delta))$ is a basis of
$K_{\mathbb{C}} \widetilde{\Pi}(G(\mathbb{R},\delta))$, 
this equation implies 
$$\mathrm{Tr}
\left(\tau^{\mathrm{Ar}}_{\psi_{G}}(\widetilde{\pi})(s_{\psi_{G}} \bar{s})\right)
= (-1)^{dS_{\pi} - dS_{\psi_{G}}}
\ \mathrm{Tr} \left(\tau^{\mathrm{ABV}}_{\psi_{G}}(\pi)(\bar{s}) \right).$$
This may be regarded as an equality between virtual characters on
$A_{\psi_G}$, and so the linear independence of these characters implies
the last assertions of the theorem.
\end{proof}

\end{document}